%
%
\documentclass[11pt,reqno]{amsart}
\usepackage{fullpage}
\usepackage{a4wide}
\usepackage[utf8x]{inputenc}
\usepackage[T1]{fontenc}
\usepackage[english]{babel}
\usepackage{hyperref}
\usepackage{amsfonts,amsmath,amssymb,amsthm,amsrefs}
\usepackage{latexsym}
\usepackage{layout}
\usepackage{dsfont}
\usepackage{color}

\newtheorem{theorem}{Theorem}

\newtheorem{lemma}[theorem]{Lemma}
\newtheorem{corol}[theorem]{Corollary}

\theoremstyle{definition}

\theoremstyle{remark}
\newtheorem{remark}[theorem]{Remark}

\newcommand{\kappaq}{\kappa^{q}}
\newcommand{\p}{\mathbb{P}}
\newcommand{\e}{\mathbb{E}}

\newcommand{\reals}{\mathbb{R}}
\newcommand{\ind}{\mathbf{1}}

\newcommand{\me}{\mathrm{e}}
\newcommand{\md}{\mathrm{d}}
\newcommand{\R}{\mathcal{F}}


%
%
\newcommand{\drift}{c}

\def\beq{\begin{eqnarray}} \def\eeq{\end{eqnarray}}

\def\al*#1{\begin{align*}#1\end{align*}}

\def\ga*#1{\begin{gather*}#1\end{gather*}}

\def\alat*#1#2{\begin{alignat*}{#1}#2\end{alignat*}}
\def\bea{\begin{eqnarray*}}
\def\eea{\end{eqnarray*}}
\def\ml*#1{\begin{multline*}#1\end{multline*}}

\allowdisplaybreaks

\begin{document}

\title[]{A unified approach to ruin probabilities with delays for spectrally negative L\'evy processes}
\author[]{Mohamed Amine Lkabous}
\address{D\'epartement de math\'ematiques, Universit\'e du Qu\'ebec \`a Montr\'eal (UQAM), 201 av.\ Pr\'esident-Kennedy, Montr\'eal (Qu\'ebec) H2X 3Y7, Canada}
\email{lkabous.mohamed\_amine@courrier.uqam.ca}
\author[]{Jean-Fran\c{c}ois Renaud}
\address{D\'epartement de math\'ematiques, Universit\'e du Qu\'ebec \`a Montr\'eal (UQAM), 201 av.\ Pr\'esident-Kennedy, Montr\'eal (Qu\'ebec) H2X 3Y7, Canada}
\email{renaud.jf@uqam.ca}

\date{\today}

\begin{abstract}
In this paper, we unify two popular approaches for the definition of actuarial ruin with implementation delays, also known as Parisian ruin. Our new definition of ruin includes both deterministic delays and exponentially distributed delays: ruin is declared the first time an excursion in the red zone lasts longer than an implementation delay with a deterministic and a stochastic component. For this Parisian ruin with mixed delays, we identify the joint distribution of the time of ruin and the deficit at ruin, therefore providing generalizations of many results previously obtained, such as in \cite{baurdoux_et_al_2015} and \cite{loeffenetal2017} for the case of an exponential delay and that of a deterministic delay, respectively.
\end{abstract}

\keywords{Ruin with delays, Parisian ruin, Lévy insurance risk processes.}
\subjclass[2000]{60G51, 91B30}
\maketitle

\section{Introduction}

%

In Parisian ruin models, if the insurance company defaults, then it is granted time to recover before liquidation. More precisely, Parisian ruin occurs if the time spent below a pre-determined critical level (default level) is longer than a delay, also called the \textit{clock}. Originally, two types of Parisian ruin have been considered, one with deterministic delays (see e.g.\ \cites{czarnapalmowski2010, loeffenetal2013,Wong_Cheung2015}) and another one with stochastic delays (\cites{landriaultetal2011,landriaultetal2014,baurdoux_et_al_2015}). These two types of Parisian ruin start a new clock each time the surplus enters the \textit{red zone}, either deterministic or stochastic. Recently, other definitions of Parisian ruin have been proposed; see e.g.\ \cites{guerinrenaud2015,lietal2017,lkabous2018}.

In this paper, we unify the definitions of Parisian ruin with deterministic delays and Parisian ruin with exponentially distributed delays by considering \textit{mixed delays}. Indeed, for this unified version of Parisian ruin, the race is between the duration of an excursion in the red zone, a deterministic implementation delay $r>0$ and a random delay described by an exponential random variable with rate $q>0$. More precisely, ruin occurs the first time an excursion below zero lasts longer than one of the two delays. Our main contributions are generalizations of several recent results obtained by Loeffen \textit{et al}. \cite{loeffenetal2017} and Lkabous \textit{et al}. \cite{lkabousetal2016}. The identities involve \textit{second-generation scale functions} and also the distribution of the spectrally negative L\'evy process at a fixed time. As they have a similar structure as the ones in \cite{lkabousetal2016}, \cite{loeffenetal2013} and \cite{loeffenetal2017}, we can then analyze limiting cases in order to recover previous results related to other definitions of Parisian ruin.

\subsection{A unified approach to Parisian ruin}

For a standard L\'evy insurance risk process $X$, the time of Parisian ruin, with delay $r>0$, has been studied first in \cite{loeffenetal2013}: it is defined as 
\begin{equation}\label{kappar}
\kappa_r = \inf \left\lbrace t > 0 \colon t - g_t > r \right\rbrace ,
\end{equation}
where $g_t = \sup \left\lbrace 0 \leq s \leq t \colon X_s \geq 0\right\rbrace$. Then, Parisian ruin occurs the first time an excursion below
zero lasts longer than the fixed implementation delay $r$. 

Parisian ruin with stochastic delays has been considered in \cites{landriaultetal2011,landriaultetal2014,baurdoux_et_al_2015,albrecheretal2016}. In this definition of ruin, the fixed delay is replaced by an independent exponential random. This time of ruin is defined as
\begin{equation*}
\kappa^{q}=\inf \left\{ t>0 \colon t-g_{t}>\me^{g_{t}}_{q} \right\} ,
\end{equation*}
where $\me^{g_{t}}_{q}$ is exponentially distributed with rate $q>0$. It is denoted by $T_0^-$ in \cite{albrecheretal2016}.

For our new definition of Parisian ruin, the time of ruin is defined as
\begin{equation}  \label{kqr}
\kappa_{r}^{q}=\kappa^{q} \wedge \kappa_{r}= \inf \left\{ t>0 \colon t-g_{t} > \left( \me^{g_{t}}_{q} \wedge r \right) \right\} .
\end{equation}


The rest of the paper is organized as follows. In Section 2, we present the necessary background material on spectrally negative L\'evy processes and scale functions, including some fluctuation identities with delays already available in the literature. The main results are presented in Section 3, followed by a discussion on those results. In Section 4, we provide explicit computations of the probability of Parisian ruin with mixed delays for two specific Lévy risk processes. Finally, in Section 5, we derive new technical identities and then provide proofs for the main results.

\section{Spectrally negative L\'{e}vy processes}\label{sectionSNLP}

We say that $X=\{X_t,t\geq 0\}$ is a L\'{e}vy insurance risk process if it is a spectrally negative L\'evy process (SNLP) on the filtered probability space $(\Omega,\mathcal{F},\{\mathcal{F}_t , t\geq0\}, \mathbb{P})$, that is
a process with stationary and independent increments and no positive jumps. To avoid trivialities, we exclude the case where $X$ has monotone paths.

As the L\'{e}vy process $X$ has no positive jumps, its Laplace transform exists: for all $\theta, t \geq 0$, 
\begin{equation*}
\e \left[ \mathrm{e}^{\theta X_t} \right] = \mathrm{e}^{t \psi(\theta)} ,
\end{equation*}
where 
\begin{equation*}
\psi(\theta) = \gamma \theta + \frac{1}{2} \sigma^2 \theta^2 + \int^{\infty}_0 \left( \mathrm{e}^{-\theta z} - 1 + \theta z \ind_{(0,1]}(z) \right) \Pi(\mathrm{d}z) ,
\end{equation*}
for $\gamma \in \reals$ and $\sigma \geq 0$, and where $\Pi$ is a $\sigma$-finite measure on $(0,\infty)$ such that 
\begin{equation*}
\int^{\infty}_0 (1 \wedge z^2) \Pi(\mathrm{d}z) < \infty .
\end{equation*}
This measure $\Pi$ is called the L\'{e}vy measure of $X$. We will use the standard Markovian notation: the law of $X$ when starting from $X_0 = x$ is denoted by $\p_x$ and the corresponding expectation by $\e_x$. We write $\p$ and $\e$
when $x=0$.

We also recall that there exists a function $\Phi \colon \lbrack 0,\infty) \rightarrow \lbrack 0,\infty)$ defined by $\Phi(p) = \sup \{\theta \geq 0 \colon \psi (\theta)=p\}$ (the right-inverse of $\psi$) such that 
\begin{equation*}
\psi \left(\Phi(p) \right) = p , \quad p \geq 0 .
\end{equation*}
We will write $\Phi=\Phi(p)$ when $p=0$. Note that we have $\Phi(p)=0$ if and only if $p=0$ and $\psi^{\prime
}(0+)\geq0$.


\subsection{Scale functions}

For $p \geq 0$, the $p$-scale function of $X$ is defined as the continuous function on $[0,\infty )$ with Laplace transform 
\begin{equation}\label{def_scale}
\int_{0}^{\infty} \mathrm{e}^{-\theta y} W^{(p)}(y) \mathrm{d}y = \frac{1}{\psi(\theta)-p} , \quad \text{for $\theta>\Phi(p)$.}
\end{equation}
This function is unique, positive and strictly increasing for $x\geq 0$ and is further continuous for $p \geq 0$. We extend $W^{(p)}$ to the whole real line by setting $W^{(p)}(x)=0$ for $x<0$. We will write $W=W^{(0)}$ when $p=0$.  

Using the notation introduced in \cite{albrecheretal2016}, we define another scale function $Z_p (x,\theta)$ by
\begin{equation}
Z_p (x,\theta) = \me^{\theta x} \left( 1-\left( \psi (\theta)-p \right) \int_{0}^{x} \me^{-\theta y} W^{(p)}(y) \mathrm{d}y \right) ,
\end{equation}
for $x\geq 0$, and by $Z_p (x,\theta)=\me^{\theta x}$, for $x<0$. We will write $Z=Z_{0}$ when $p=0$. Note that there is a one-to-one correspondance between $Z_p (x,\theta)$ just defined and the function $\mathcal{H}^{(p,s)}(x)$ defined in \cite{loeffenetal2014}; another version of this function was also defined in \cite{baurdoux_et_al_2015}. From now on we write $\psi_p(\theta)=\psi (\theta)-p$.


It is known that
\begin{equation}\label{CM1}
\lim_{b \rightarrow \infty} \dfrac{W^{(p)}(x+b)}{W^{(p)}(b)} = \me^{\Phi(p) x}
\end{equation}
and, for $\theta>\Phi(p)$,
\begin{equation}\label{limitZW}
\lim_{b \rightarrow \infty} \frac{Z_p (b,\theta)}{W^{(p)} (b)} = \frac{\psi_p(\theta)}{\theta-\Phi(p)} .
\end{equation}

For the sake of compactness of the results, and as it is now the way to go in the literature, we will use the following functions. For $p,p+s\geq 0$ and $a,x \in \reals$,
\begin{equation}\label{eq:convsnlp}
\mathcal{W}_{a}^{\left( p,s\right) }\left( x\right) = W^{\left(p+s\right)}\left( x\right) - s \int_{0}^{a} W^{\left( p+s \right) }\left( x-y\right) W^{\left( p\right) }\left( y\right) \md y .
\end{equation}
As obtained in \cite{loeffenetal2014}, we have
$$
(s-p) \int_0^x W^{(p)}(x-y) W^{(s)}(y) \mathrm{d}y = W^{(s)}(x) - W^{(p)}(x)
$$
and thus $\mathcal{W}_{a}^{(p,s)} (x)$ can also be written as follows:
\begin{equation}\label{w-script_second-def}
\mathcal{W}_{a}^{(p,s)} (x) = W^{(p)} (x) + s \int_a^x W^{(p+s)} (x-y) W^{(p)} (y) \md y .
\end{equation}
In what follows, we will use the two expressions.

For later use, note that we can show 
\begin{equation}\label{LapscriptW}
\int_{0}^{\infty} \me^{-\theta z} \mathcal{W}_{a}^{\left( p,s\right)} \left(a+z\right) \mathrm{d}z = \frac{Z_{p}\left( a,\theta\right)}{ \psi_{p+s}(\theta)}, \quad \theta >\Phi(p+s) .
\end{equation}

As most (classical) fluctuation identities can be expressed in terms of the above scale functions, we have collected some of those identities in Appendix~\ref{sect:classical_fluct_identities}.

\medskip

Inspired by \cite{loeffenetal2017}, we now define
\begin{equation}\label{Exp1}
\Lambda^{\left( p\right)} \left( x;r,s\right) = \int_{0}^{\infty } \mathcal{W}_{z}^{\left( p+s,-s\right)} \left( x+z\right) \frac{z}{r} \mathbb{P} \left(X_{r} \in \mathrm{d}z \right) .
\end{equation}
and using \eqref{eq:convsnlp}, we can rewrite $\Lambda^{\left( p\right)} \left( x;r,s\right)$ in the form
\begin{equation}\label{Exp2}
\Lambda^{\left( p\right)} \left( x;r,s\right)= \int_{0}^{\infty } \mathcal{W}_{x}^{\left( p,s\right)} \left( x+z\right) \frac{z}{r} \mathbb{P} \left(X_{r} \in \mathrm{d}z \right) .
\end{equation}
It could be named the \textit{$(s,r)$-delayed $p$-scale function of $X$}, as we will see in what follows (see Section~\ref{sect:discussion}). When $s=0$, we recover the function $\Lambda^{(p)}$ originally defined in \cite{loeffenetal2017}. To be more precise, when $s=0$, we have $\Lambda^{\left( p\right)} \left(x;r,0\right) = \Lambda^{\left( p\right)}\left( x,r\right)$, where
$$
\Lambda^{\left( p\right)}\left( x,r\right) = \int_{0}^{\infty} W^{\left( p\right)} \left( x+z\right) \frac{z}{r} \mathbb{P} \left( X_{r} \in \mathrm{d}z \right) ,
$$
and we write $\Lambda=\Lambda^{(0)}$.

Here is another connection between these two functions. We can re-write $\Lambda^{\left( p\right)}(x;r,s)$ using~\eqref{eq:convsnlp} and~\eqref{Exp2}:
\begin{equation}\label{lemma}
\Lambda^{\left(p\right)} \left(x;r,s\right) = \Lambda^{\left( p+s\right)} \left( x,r\right) - s \int_{0}^{x} \Lambda^{\left( p+s\right)} \left( y,r\right) W^{\left( p\right)} \left(x-y\right) \mathrm{d}y .
\end{equation}
This last identity will be very important in what follows.

Finally, in the main results, we will also use the following auxiliary function: for $x\in \reals$ and $p,p+s,\lambda\geq 0$, set
\begin{eqnarray*}
\R^{(p,\lambda)} (x;r,s) &=& \frac{1}{\psi_{p+s}(\lambda)}\left(\psi_p(\lambda) \me^{\psi_{p+s}(\lambda)r}-s \right) Z_{p} (x,\lambda) \\
&& \quad - \me^{\psi_{p+s}(\lambda)r}\psi_p \left( \lambda \right) \int_{0}^{r} \me^{-\psi \left( \lambda \right)u} \Lambda^{\left( p \right)} \left( x;u,s\right) \mathrm{d}u .
\end{eqnarray*}
For $\lambda=0$, we write $\R^{\left( p,0\right)} = \R^{\left( p\right)}$, where
$$
\R^{(p)}(x;r,s) = \frac{1}{\left(p+s \right)}\left(s+p \me^{-\left( p+s \right) r} \right) Z_{p} (x,0) + p \me^{-\left( p+s \right) r} \int_{0}^{r} \Lambda^{\left( p \right)} \left( x;u,s\right) \mathrm{d}u .
$$
and $s=0$, we have $\R^{\left( p,\lambda \right)} \left(x;r,0\right) = \R^{\left( p,\lambda \right)} \left(x,r\right)$.

\subsection{Fluctuation identities with delays}

Before moving on to the presentation of our main results, let us present some of the existing fluctuation identities with delays. Note that the expressions presented below might differ from the original ones as we will make use of the notation introduced in the previous section.

First, recall the definitions of standard first-passage stopping times (without delay): for $b \in \mathbb{R}$,
\begin{align*}
\tau_b^- &= \inf\{t>0 \colon X_t<b\} \quad \text{and} \quad \tau_b^+ = \inf\{t>0 \colon X_t > b\} ,
\end{align*}
with the convention $\inf \emptyset=\infty$.

For simplicity, let us assume that the \textit{net profit condition} is verified, i.e.\ $\e \left[ X_1 \right] = \psi^{\prime}(0+) > 0$.

The probability of Parisian ruin with exponential delays of rate $q>0$ was first computed in \cite{landriaultetal2011}: for $x \in \reals$, we have
\begin{equation}\label{Pruine2}
\mathbb{P}_{x} \left( \kappa^{q}<\infty \right) = 1 - \mathbb{E}\left[ X_{1}\right] \frac{\Phi(q)}{q} Z \left(x,\Phi(q) \right) .
\end{equation}
More general identities were later obtained in \cite{landriaultetal2014}, \cite{baurdoux_et_al_2015} and \cite{albrecheretal2016}. For example, for $x \leq b$ and $p,q\geq 0$, we have
\begin{equation}\label{Parexit3}
\e_{x} \left[ \me^{-p \kappaq} \ind_{\left\lbrace \kappaq < \tau _{b}^{+} \right\rbrace} \right] =\frac{q}{p+q}\left(Z_{p}\left(x,0\right) -\frac{Z_{p}\left( x,\Phi(p+q)\right)}{Z_{p}\left( b,\Phi(p+q)\right)} Z_{p} \left(b,0\right) \right)
\end{equation}
and
\begin{equation}\label{Parexit4}
\e_{x}\left[ \me^{-p \tau _{b}^{+}};\tau _{b}^{+}<\kappaq\right]=\frac{Z_{p}\left( x,\Phi(p+q)\right) }{Z_{p}\left( b,\Phi(p+q)\right)} ,
\end{equation}
where the first identity is taken from \cite{baurdoux_et_al_2015} (where a slightly different notation is used) and the second one is taken from \cite{albrecheretal2016}.

On the other hand, the probability of Parisian ruin with deterministic delays of size $r>0$ was first computed for an SNLP in \cite{loeffenetal2013}: for $x \in \reals$, we have
\begin{equation}\label{SNLPPr}
\p_{x} \left(\kappa_{r}<\infty \right) = 1 - \e[X_{1}] \frac{\Lambda(x,r)}{\int^{\infty}_{0} \dfrac{z}{r} \p \left( X_{r} \in \md z \right)} .
\end{equation}
Then, in \cite{lkabousetal2016} (see Theorem 4 in that paper with $\delta=0$) and in \cite{loeffenetal2017}, the following identity was obtained:
\begin{equation}\label{Parexit2}
\e_{x} \left[ \mathrm{e}^{-p \tau_{b}^{+}} \mathbf{1}_{\left\{ \tau_{b}^{+} < \kappa_{r} \right\}} \right] =\frac{\Lambda^{\left( p\right)} \left( x,r\right)}{\Lambda^{\left( p\right)} \left( b,r \right)} .
\end{equation}
Note that more general quantities were computed in \cite{loeffenetal2017}. We will make the connections later, when appropriate.

\section{Main results}

We are now ready to state our main results. They are generalizations of those presented in the previous section in the sense that $\kappa^q$ or $\kappa_r$ is replaced by the more general time of ruin $\kappa_r^q$.

First, here is the joint distribution of our new time of Parisian ruin and the corresponding deficit at ruin:
\begin{theorem}\label{maintheo}
For $p,\lambda \geq 0$, $b,q,r>0$ and $x \leq b$, we have
\begin{equation}\label{theo}
\mathbb{E}_{x} \left[ \mathrm{e}^{-p\kappa_{r}^{q} + \lambda X_{\kappa_{r}^{q}}} \mathbf{1}_{\left\{ \kappa_{r}^{q} < \tau_{b}^{+} \right\}} \right] = \R^{\left( p,\lambda \right)} \left( x;r,q\right) - \frac{\Lambda^{\left( p\right) }\left( x;r,q\right)}{\Lambda^{\left( p\right)} \left(b;r,q\right)} \R^{\left(p,\lambda \right)} \left( b;r,q \right)
\end{equation}
and
\begin{equation}\label{LaplaceTr3}
\mathbb{E}_{x} \left[ \mathrm{e}^{-p \tau_{b}^{+}} \mathbf{1}_{\left\{ \tau_{b}^{+} < \kappa_{r}^{q} \right\}} \right] =\frac{\Lambda^{\left( p\right) }\left( x;r,q\right)}{\Lambda^{\left( p\right) }\left( b;r,q\right)} .
\end{equation}

\end{theorem}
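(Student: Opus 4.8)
The plan is to reduce the two-sided identity \eqref{theo} to the one-sided exit identity \eqref{LaplaceTr3} by a single application of the strong Markov property, and to obtain \eqref{LaplaceTr3} itself by embedding the exponential delay into an independent Poisson clock and then recognising the resulting quantity as an occupation-time-discounted version of the purely deterministic Parisian exit problem \eqref{Parexit2}. Concretely, I would first record the Poisson representation of $\kappaq$ (as in \cite{albrecheretal2016}): if $N$ is an independent Poisson process of rate $q$, then, by the memorylessness of the exponential delays and the independence of increments of $N$ over the disjoint time intervals carved out by the successive excursions below zero, $\kappaq$ equals the first arrival time of $N$ at which $X$ is strictly negative. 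Consequently $\kappa_{r}^{q}=\kappaq\wedge\kappa_{r}$ is the first time an excursion below zero either contains a point of $N$ or reaches age $r$.

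Using this representation and the independence of $N$ from $X$, I would rewrite the left-hand side of \eqref{LaplaceTr3} by conditioning on the path of $X$: since $\{\tau_{b}^{+}<\kappaq\}$ is exactly the event that $N$ has no point in the random set $\{s<\tau_{b}^{+}:X_{s}<0\}$, its conditional probability given $X$ is $\me^{-q\int_{0}^{\tau_{b}^{+}}\ind_{\{X_{s}<0\}}\md s}$, whence
\[
\e_{x}\!\left[\me^{-p\tau_{b}^{+}}\ind_{\{\tau_{b}^{+}<\kappa_{r}^{q}\}}\right]=\e_{x}\!\left[\me^{-p\tau_{b}^{+}-q\int_{0}^{\tau_{b}^{+}}\ind_{\{X_{s}<0\}}\md s}\,\ind_{\{\tau_{b}^{+}<\kappa_{r}\}}\right].
\]
This is a deterministic-delay Parisian exit problem carrying an extra rate-$q$ discount accrued only while $X$ is negative. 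The point of the computation is then to show that introducing this negative-occupation discount turns the answer $\Lambda^{(p)}(x,r)/\Lambda^{(p)}(b,r)$ of \eqref{Parexit2} into $\Lambda^{(p)}(x;r,q)/\Lambda^{(p)}(b;r,q)$; the algebraic bridge is precisely identity \eqref{lemma}, which expresses $\Lambda^{(p)}(\cdot;r,q)$ as $\Lambda^{(p+q)}(\cdot,r)$ corrected by a convolution against $W^{(p)}$, i.e.\ exactly the correction produced by discounting at rate $p+q$ below zero and at rate $p$ above it.

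With \eqref{LaplaceTr3} in hand, the two-sided identity \eqref{theo} follows by gluing at $\tau_{b}^{+}$. On $\{\tau_{b}^{+}<\kappa_{r}^{q}\}$ the process sits at level $b>0$ at time $\tau_{b}^{+}$ (no positive jumps), so no excursion is in progress and both clocks are reset; the strong Markov property then gives
\[
\R^{(p,\lambda)}(x;r,q)=\e_{x}\!\left[\me^{-p\kappa_{r}^{q}+\lambda X_{\kappa_{r}^{q}}}\ind_{\{\kappa_{r}^{q}<\tau_{b}^{+}\}}\right]+\e_{x}\!\left[\me^{-p\tau_{b}^{+}}\ind_{\{\tau_{b}^{+}<\kappa_{r}^{q}\}}\right]\R^{(p,\lambda)}(b;r,q),
\]
provided one first identifies the unconstrained ruin value $\e_{x}[\me^{-p\kappa_{r}^{q}+\lambda X_{\kappa_{r}^{q}}}\ind_{\{\kappa_{r}^{q}<\infty\}}]$ with $\R^{(p,\lambda)}(x;r,q)$. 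Substituting \eqref{LaplaceTr3} and solving for the constrained expectation yields \eqref{theo} at once.

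The main obstacles are the two core computations feeding this scheme. The first is the occupation-time identity of the second paragraph, which I expect to be the hardest step: one must track the additional rate-$q$ discount through the excursion-theoretic decomposition of the deterministic-delay problem and verify that it assembles into \eqref{lemma}. I would handle this via the $q$-shifted versions of the classical two-sided exit and resolvent identities (of the type collected in the appendix), reducing everything to Laplace transforms and invoking \eqref{LapscriptW}. The second is the identification of $\R^{(p,\lambda)}$: decomposing the total ruin value on the \emph{fatal} excursion, the deterministic-cutoff mechanism contributes the integral $\int_{0}^{r}\me^{-\psi(\lambda)u}\Lambda^{(p)}(x;u,q)\md u$ (the clock being reached at age $u$, weighted by the overshoot law encoded in $\Lambda$ through $\tfrac{z}{r}\p(X_{r}\in\md z)$ as in \eqref{Exp2}), while the Poisson mechanism contributes the $Z_{p}(x,\lambda)$ term; matching the Laplace transform in $\lambda$ of the overshoot distribution then produces the stated prefactors $\tfrac{1}{\psi_{p+s}(\lambda)}(\psi_{p}(\lambda)\me^{\psi_{p+s}(\lambda)r}-s)$ and $\me^{\psi_{p+s}(\lambda)r}\psi_{p}(\lambda)$, with $s=q$.
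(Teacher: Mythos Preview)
Your occupation-time route to \eqref{LaplaceTr3} is a genuinely different and attractive idea: embedding the exponential clock into an independent Poisson process and conditioning on $X$ does yield
\[
\e_{x}\!\left[\me^{-p\tau_{b}^{+}}\ind_{\{\tau_{b}^{+}<\kappa_{r}^{q}\}}\right]
=\e_{x}\!\left[\me^{-p\tau_{b}^{+}-q\int_{0}^{\tau_{b}^{+}}\ind_{\{X_{s}<0\}}\md s}\,\ind_{\{\tau_{b}^{+}<\kappa_{r}\}}\right],
\]
and the target $\Lambda^{(p)}(x;r,q)/\Lambda^{(p)}(b;r,q)$ indeed has the right algebraic shape via \eqref{lemma}. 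The paper instead decomposes at $\tau_{0}^{-}$, analyses a single excursion below zero explicitly (Lemmas~\ref{lem2} and~\ref{mainlemma}), and solves a self-referential equation at $x=0$, with a BV/UBV $\epsilon$-approximation in the unbounded-variation case. Your approach would bypass the $\epsilon$-approximation and the case distinction, at the price of having to establish the occupation-time-discounted deterministic Parisian identity; that step is not trivial and you only gesture at it, so this part of the proposal is a plausible programme rather than a proof.

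The derivation of \eqref{theo}, however, contains a concrete error. You assert that the \emph{unconstrained} ruin functional satisfies
\[
\e_{x}\!\left[\me^{-p\kappa_{r}^{q}+\lambda X_{\kappa_{r}^{q}}}\ind_{\{\kappa_{r}^{q}<\infty\}}\right]=\R^{(p,\lambda)}(x;r,q),
\]
and then split at $\tau_{b}^{+}$. This identification is false: already for $\lambda=0$, Corollary~\ref{LaplaceTr} (equation~\eqref{LaplaceTr2}) shows the unconstrained value equals $\R^{(p)}(x;r,q)-\Omega^{(p)}(r,q)\,\Lambda^{(p)}(x;r,q)$, and $\Omega^{(p)}(r,q)\neq0$ in general. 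One can see this directly for $x<0$: combining Lemma~\ref{lem2} with the strong Markov property at $\tau_{0}^{+}$ gives
\[
\e_{x}\!\left[\me^{-p\kappa_{r}^{q}+\lambda X_{\kappa_{r}^{q}}}\ind_{\{\kappa_{r}^{q}<\infty\}}\right]
=\R^{(p,\lambda)}(x;r,q)-\me^{-(p+q)r}\bigl(1-\e_{0}[\cdots]\bigr)\,\Lambda^{(p)}(x;r,q),
\]
where the bracket is the same unconstrained functional at $x=0$; the correction term does not vanish.

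Your scheme is salvageable, but not for free. The correct statement is that the unconstrained value has the form $\R^{(p,\lambda)}(x;r,q)+C\,\Lambda^{(p)}(x;r,q)$ for a constant $C$ independent of $x$; substituting this into your Markov split at $\tau_{b}^{+}$ together with \eqref{LaplaceTr3} makes the $C$-terms cancel and yields \eqref{theo}. But you would first have to \emph{prove} that structural form for the unconstrained value, and your sketch (``decomposing on the fatal excursion'') is circular: carrying out that excursion decomposition rigorously is precisely the content of Lemmas~\ref{lem2}--\ref{mainlemma} and the argument at $x=0$ in the paper. In other words, the paper obtains \eqref{theo} directly from the bounded problem and then lets $b\to\infty$ to get the unconstrained value as a corollary, whereas your plan reverses the logical order without providing an independent computation of the harder (unconstrained) object.
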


Setting $\lambda=0$ in the previous Theorem, we obtain the following Laplace transforms for the Parisian time of ruin:

\begin{corol}\label{LaplaceTr}
Let $p\geq 0$ and $b,q,r>0$. For $x \leq b$, we have
\begin{equation}\label{Laplace1}
\mathbb{E}_{x} \left[ \mathrm{e}^{-p \kappa_{r}^{q}} \mathbf{1}_{\left\{\kappa_{r}^{q} < \tau_{b}^{+} \right\}} \right] = \R^{\left(p\right)} \left( x;r,q \right) - \frac{\Lambda^{\left( p\right)} \left(x;r,q\right)}{\Lambda^{\left( p\right)} \left(b;r,q\right)} \R^{\left(p\right)} \left(b;r,q\right) .
\end{equation}
and, for $x\in \reals$, we have
\begin{equation}\label{LaplaceTr2}
\mathbb{E}_{x} \left[ \mathrm{e}^{-p \kappa_{r}^{q}} \mathbf{1}_{\left\{\kappa_{r}^{q} < \infty \right\}} \right] = \R^{\left(p\right)} \left( x;r,q \right)  - \Omega^{(p)}\left(r,q\right) \times \Lambda^{\left(p\right)} \left( x;r,q\right) 
\end{equation}
where 
$$
\Omega^{(p)}\left(r,q\right)= \frac{\frac{p}{\left( p+q\right) \Phi(p)}\left( q+p\me^{-\left( p+q\right)r}\right) + p \me^{-\left( p+q\right)r}\int_{0}^{r}\left(\int_{0}^{\infty } Z_{p+q}\left(z,\Phi(p)\right)\frac{z}{s}\mathbb{P}\left( X_{s}\in \mathrm{d}z\right)\right) \mathrm{d}s}{\int_{0}^{\infty} Z_{p+q} \left(z,\Phi(p)\right) \dfrac{z}{r} \mathbb{P}\left( X_{r} \in \mathrm{d}z \right)} .
$$

\end{corol}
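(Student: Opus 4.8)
The first identity \eqref{Laplace1} requires no work: it is exactly \eqref{theo} evaluated at $\lambda = 0$, using $\R^{(p,0)} = \R^{(p)}$. The plan is therefore to derive \eqref{LaplaceTr2} from \eqref{Laplace1} by letting $b \to \infty$. Since $\tau_b^+ \uparrow \infty$ as $b \to \infty$, the events $\{\kappa_r^q < \tau_b^+\}$ increase to $\{\kappa_r^q < \infty\}$, so the monotone convergence theorem gives
\[
\lim_{b\to\infty} \e_x\left[\me^{-p\kappa_r^q}\ind_{\{\kappa_r^q<\tau_b^+\}}\right] = \e_x\left[\me^{-p\kappa_r^q}\ind_{\{\kappa_r^q<\infty\}}\right].
\]
On the right-hand side of \eqref{Laplace1} the term $\R^{(p)}(x;r,q)$ is free of $b$, so everything reduces to computing the limit $\Omega^{(p)}(r,q) = \lim_{b\to\infty} \R^{(p)}(b;r,q)/\Lambda^{(p)}(b;r,q)$.

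To evaluate this limit I would first establish the large-$b$ behaviour of the building blocks, all of which are comparable to $W^{(p)}(b)$. The crucial step is the pointwise asymptotic
\[
\lim_{b\to\infty} \frac{\mathcal{W}_b^{(p,q)}(b+z)}{W^{(p)}(b)} = Z_{p+q}(z,\Phi(p)), \qquad z \geq 0 .
\]
Indeed, starting from the representation \eqref{w-script_second-def} and substituting $y = b + w$ in the convolution, \eqref{CM1} yields $W^{(p)}(b+w)/W^{(p)}(b) \to \me^{\Phi(p)w}$, so the limit equals $\me^{\Phi(p)z} + q\int_0^z W^{(p+q)}(z-w)\me^{\Phi(p)w}\md w$; reversing the integration variable and using $\psi_{p+q}(\Phi(p)) = -q$ identifies this with $Z_{p+q}(z,\Phi(p))$ by its very definition. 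Feeding this into \eqref{Exp2} gives
\[
\Lambda^{(p)}(b;u,q) \sim W^{(p)}(b)\int_0^\infty Z_{p+q}(z,\Phi(p))\frac{z}{u}\,\p(X_u\in\md z), \qquad b\to\infty ,
\]
while $Z_p(b,0) = 1 + p\int_0^b W^{(p)}(y)\,\md y \sim \tfrac{p}{\Phi(p)}W^{(p)}(b)$ follows from \eqref{CM1}.

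Substituting these two asymptotics into the definition of $\R^{(p)}(b;r,q)$ and dividing by $\Lambda^{(p)}(b;r,q)$, the common factor $W^{(p)}(b)$ cancels; after interchanging the surviving limit with the remaining $\int_0^r$ integral I recover exactly the stated formula for $\Omega^{(p)}(r,q)$. Combined with the left-hand side computation, this proves \eqref{LaplaceTr2}.

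The main obstacle is the justification of the two limit interchanges, in particular passing the limit under the integral sign in $\Lambda^{(p)}(b;u,q)$. This needs a $b$-uniform dominating function for $\mathcal{W}_b^{(p,q)}(b+z)/W^{(p)}(b)$ that is integrable against $z\,\p(X_u\in\md z)$; the monotonicity and exponential growth of the scale functions allow one to take it of the form $C\me^{\Phi(p+q)z}$, and the finiteness of $\e\left[|X_u|\me^{\Phi(p+q)X_u}\right]$ — which holds because a spectrally negative process has all positive exponential moments — makes this integrable and simultaneously guarantees convergence of the integral defining $\Omega^{(p)}$. The asymptotic $Z_p(b,0) \sim \tfrac{p}{\Phi(p)}W^{(p)}(b)$ tacitly uses $\Phi(p) > 0$; the boundary case $p = 0$, where this leading coefficient degenerates, is recovered by continuity in $p$.
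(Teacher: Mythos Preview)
Your argument is correct and follows essentially the same route as the paper: set $\lambda=0$ for \eqref{Laplace1}, then let $b\to\infty$ and compute $\lim_{b\to\infty}\R^{(p)}(b;r,q)/\Lambda^{(p)}(b;r,q)$ by first showing that the integrand of $\Lambda^{(p)}(b;\cdot,q)$, scaled by $W^{(p)}(b)$, converges to $Z_{p+q}(z,\Phi(p))$. The only cosmetic difference is that the paper carries out this computation via the representation~\eqref{Exp1} using $\mathcal{W}_z^{(p+q,-q)}(b+z)$, whereas you use the equivalent form~\eqref{Exp2} with $\mathcal{W}_b^{(p,q)}(b+z)$; your treatment is in fact more explicit about the dominated-convergence justification and the degenerate case $p=0$, which the paper leaves implicit.
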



Setting $p=0$ in~\eqref{LaplaceTr2}, we obtain the following expression for the probability of Parisian ruin with mixed delays:
\begin{corol}\label{cor:proba_ruin}
For $x \in \reals$ and $q,r>0$, we have
\begin{equation}\label{SNLPPr2}
\p_{x} \left( \kappa_{r}^{q} < \infty \right) = 1 - \left(\e\left[ X_{1}\right] \right)_+ \frac{\Lambda \left(x;r,q\right)}{\int_{0}^{\infty} Z_{q}(u,\Phi(0)) \dfrac{u}{r} \p\left( X_{r} \in \md u \right)} .
\end{equation}
\end{corol}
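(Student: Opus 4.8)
The plan is to deduce \eqref{SNLPPr2} by letting $p \downarrow 0$ in the Laplace transform \eqref{LaplaceTr2}. On the left-hand side, since $\me^{-p\kappa_r^q}\ind_{\{\kappa_r^q<\infty\}}$ increases to $\ind_{\{\kappa_r^q<\infty\}}$ as $p\downarrow 0$, monotone convergence gives
\begin{equation*}
\lim_{p\downarrow 0}\e_x\left[\me^{-p\kappa_r^q}\ind_{\{\kappa_r^q<\infty\}}\right] = \p_x\left(\kappa_r^q<\infty\right),
\end{equation*}
so it remains to compute the limit of the right-hand side of \eqref{LaplaceTr2} term by term. From the definition of $\R^{(p)}$ with $s=q$, evaluating at $p=0$ and using $Z_0(x,0)=1$ (which follows from $\psi(0)=0$ in the definition of $Z_p$), every summand carrying an explicit factor $p$ drops out and I obtain $\R^{(0)}(x;r,q)=1$; this produces the leading $1$ in \eqref{SNLPPr2}. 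Similarly, $\Lambda^{(p)}(x;r,q)\to\Lambda^{(0)}(x;r,q)=\Lambda(x;r,q)$ as $p\downarrow 0$, directly from \eqref{Exp2}.

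The crux is the limit of $\Omega^{(p)}(r,q)$. The only delicate factor is $p/\Phi(p)$ in the first term of its numerator. Since $\psi(\Phi(p))=p$ and $\psi$ is smooth and convex with $\psi(0)=0$, when $\psi'(0+)>0$ a Taylor expansion gives $\Phi(p)\sim p/\psi'(0+)$ as $p\downarrow 0$, so that $p/\Phi(p)\to\psi'(0+)=\e[X_1]$; whereas when $\psi'(0+)\le 0$ one has $\Phi(0)>0$ and hence $p/\Phi(p)\to 0$. In both cases $\lim_{p\downarrow 0} p/\Phi(p)=(\e[X_1])_+$, which is precisely where the positive part in \eqref{SNLPPr2} originates. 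The term $p\me^{-(p+q)r}\int_0^r(\cdots)\md s$ in the numerator vanishes in the limit because of its explicit factor $p$, while the integral in the denominator converges to $\int_0^\infty Z_q(u,\Phi(0))\frac{u}{r}\p(X_r\in\md u)$ by dominated convergence, using continuity of $p\mapsto Z_{p+q}(z,\Phi(p))$ and an integrable bound over a neighbourhood of $p=0$; this limiting denominator is bounded below (by $\int_0^\infty\frac{u}{r}\p(X_r\in\md u)>0$), hence nonzero. Collecting these limits yields $\Omega^{(0)}(r,q)=(\e[X_1])_+\big/\int_0^\infty Z_q(u,\Phi(0))\frac{u}{r}\p(X_r\in\md u)$, and substituting into \eqref{LaplaceTr2} gives \eqref{SNLPPr2}.

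The main obstacle is the behaviour of $p/\Phi(p)$ near $0$ together with the resulting case distinction: it is this asymptotic, combined with the vanishing of the $p$-weighted terms, that turns the formula for general $p$ into one governed by $(\e[X_1])_+$, and it must be organized so as to cover simultaneously the net-profit case $\psi'(0+)>0$ (where $\Phi(0)=0$, so the ruin probability is strictly less than $1$) and the case $\psi'(0+)\le 0$ (where $\Phi(0)>0$, the numerator limit is $0$, and ruin is almost sure). Once this is settled, the interchange of limit and integral in the numerator and denominator of $\Omega^{(p)}$ is a routine dominated-convergence argument, and the remaining term-by-term limits are immediate from the definitions.
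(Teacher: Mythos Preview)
Your proof is correct and follows the same route as the paper, which simply writes ``Setting $p=0$ in~\eqref{LaplaceTr2}'' with no further detail; you have supplied the missing computations, in particular the key asymptotic $p/\Phi(p)\to(\e[X_1])_+$. One small point: your claim that $\psi'(0+)\le 0$ implies $\Phi(0)>0$ is only valid for strict inequality; in the borderline case $\psi'(0+)=0$ one still has $\Phi(0)=0$, but then $\psi(\theta)\sim\frac12\psi''(0+)\theta^2$ gives $\Phi(p)\sim\sqrt{2p/\psi''(0+)}$ and hence $p/\Phi(p)\to 0$ as well, so the conclusion stands.
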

\subsection{Discussion on the results}\label{sect:discussion}

Our Parisian fluctuation identities are arguably compact and have a similar structure as classical fluctuation identities (without delays) as well as previously-obtained Parisian fluctuation identities (see e.g.\ \cites{loeffenetal2013,lkabousetal2016}).

Indeed, in Equation~\eqref{LaplaceTr3}, the $(q,r)$-delayed $p$-scale function $\Lambda^{\left( p\right)}(\cdot;r,q)$ plays a similar r\^ole as the one played by the classical $p$-scale function $W^{(p)}(\cdot)$ in the solution to the classical two-sided exit problem: for $x \leq b$, we have
\begin{equation}\label{exit1}
\mathbb{E}_{x} \left[ \mathrm{e}^{-p \tau_{b}^{+}} \mathbf{1}_{\left\{ \tau_{b}^{+} < \tau_0^- \right\}} \right] =\frac{W^{(p)}(x)}{W^{(p)}(b)} .
\end{equation}
Similarly, in Equation~\eqref{theo}, the $(q,r)$-delayed $(p,\lambda)$-scale function $\R^{\left( p,q,\lambda \right)} \left( \cdot,r\right)$ plays a similar r\^ole as the one played by the scale function $Z_p(\cdot,\lambda)$ in the following classical fluctuation identity: for $x \leq b$, we have
\begin{equation}\label{exit2}
\mathbb{E}_{x} \left[ \mathrm{e}^{-p \tau_0^- + \lambda X_{\tau_0^-}} \mathbf{1}_{\left\{ \tau_0^- < \tau_{b}^{+} \right\}} \right] = Z_p(x,\lambda) - \frac{W^{(p)}(x)}{W^{(p)}(b)} Z_p(b,\lambda) .
\end{equation}
See \cite{kyprianou2013} for the solution to the two-sided exit problem and see e.g.\ \cite{albrecheretal2016} for the latter identity.

\medskip


For the rest of this section, we will demonstrate that our results are simultaneously generalizing known identities for Parisian ruin with exponential delays and Parisian ruin with deterministic delays. As we have seen in the previous section, the results obtained so far in the literature, for either one definition of Parisian ruin or the other, did not seem to have strong connections allowing to recover the identity for one definition of ruin from the corresponding identity for the other definition of ruin.

We will also obtain analytical relationships between various intermediate quantities and auxiliary functions such as $\Lambda^{(p,q)}$ and $\R^{(p,q)}$.

%

\subsubsection{Parisian ruin with exponential delays}\label{rem}

First, we will show that the identity in~\eqref{LaplaceTr3} converges, as $r\rightarrow \infty$, to the solution of the delayed version of the two-sided exit problem when the implementation delay is exponentially distributed, namely the identity in~\eqref{Parexit4}.

Using~\eqref{Parexit2}, Lebesgue's convergence theorem and~\eqref{Laptrans}, we have
\begin{eqnarray*}
\lim_{r\rightarrow \infty} \frac{\Lambda^{\left( p+q\right)} \left( x,r\right)}{\Lambda^{\left( p+q\right)} \left(
b,r \right)} &=& \lim_{r\rightarrow \infty} \mathbb{E}_{x} \left[ \mathrm{e}^{-\left(p+q\right) \tau_{b}^{+}} \mathbf{1}_{\left\{ \tau_{b}^{+}<\kappa_{r} \right\}} \right] \notag \\
&=& \mathbb{E}_{x} \left[ \mathrm{e}^{-\left( p+q\right) \tau _{b}^{+}} \mathbf{1}_{\left\{ \tau_{b}^{+} < \infty \right\}} \right] = \me^{\Phi \left(p+q\right) \left( x-b\right)} .
\end{eqnarray*}
Consequently, using Lebesgue's convergence theorem and~\eqref{Exp2}:, we have
\begin{eqnarray*}
\lim_{r\rightarrow \infty} \frac{\Lambda^{\left( p\right)} \left( x;r,q\right)}{\Lambda^{\left( p\right)} \left( b;r,q\right)} &=& \lim_{r\rightarrow \infty} \frac{\Lambda^{\left( p+q\right)} \left(x,r\right)}{\Lambda^{\left( p+q \right)} \left( b,r\right)} - q \int_{0}^{x} W^{\left( p\right)} \left( x-u\right) \left( \lim_{r\rightarrow \infty} \frac{\Lambda^{\left(p+q\right)} \left( u,r\right)}{\Lambda^{\left( p+q\right)} \left(b,r\right)} \right) \mathrm{d}u \\
&=& \me^{\Phi \left( p+q\right) \left( x-b\right)} - q \int_{0}^{x} W^{\left( p\right)} \left( x-u\right) \me^{\Phi \left( p+q\right) \left( u-b\right)} \mathrm{d}u \\
&=& \me^{-\Phi \left( p+q\right) b} Z_{p}\left(x,\Phi(p+q)\right) .
\end{eqnarray*}
Finally, taking the limit as $r\rightarrow \infty$ of the identity in~\eqref{LaplaceTr3}, we get
\begin{eqnarray*}
\lim_{r\rightarrow \infty} \mathbb{E}_{x} \left[ \mathrm{e}^{-p\tau_{b}^{+}} \mathbf{1}_{\left\{ \tau_{b}^{+} < \kappa_{r}^{q} \right\} } \right] &=& \lim_{r\rightarrow \infty} \frac{\Lambda^{\left( p\right)} \left( x;r,q\right)}{\Lambda^{\left( p\right)} \left( b;r,q\right) } \\
&=& \lim_{r\rightarrow \infty} \frac{\Lambda^{\left( p\right)} \left( x;r,q\right) / \Lambda^{\left( p+q\right)} \left( b,r\right)}{\Lambda^{\left( p\right)} \left( b;r,q\right)/ \Lambda^{\left( p+q\right)} \left(b,r\right)} \\
&=& \frac{\me^{-\Phi \left( p+q\right) b} Z_{p} \left(x,\Phi(p+q) \right)}{\me^{-\Phi \left( p+q\right) b} Z_{p}\left(b,\Phi(p+q) \right)} \\
&=& \frac{Z_{p} \left(x,\Phi(p+q) \right)}{Z_{p} \left(b,\Phi(p+q) \right)} ,
\end{eqnarray*}
which is, as announced, the corresponding identity when there is no deterministic component in the delays; see~\eqref{Parexit4}.

\medskip

Second, we will show that the identity in~\eqref{Laplace1} converges, as $r\rightarrow \infty$, to the solution of the delayed version of the two-sided exit problem when the implementation delay is exponentially distributed, namely the identity in~\eqref{Parexit3}.

But before let us show that
\begin{equation}\label{eq:conv_Fscript}
\lim_{r \to \infty} \R^{\left( p,q\right)} (x,r) = \dfrac{q}{p+q} Z_p (x,0) .
\end{equation}
We want to compute the following limit:
\begin{eqnarray*}
\lim_{r\rightarrow \infty} \R^{\left( p,q\right)} \left( x,r\right) &=& \lim_{ r\rightarrow \infty} \frac{1}{p+q}\left( q+p \me^{-\left( p+q\right)r} \right) Z_p \left( x,0\right) \notag \\
&& \quad + \lim_{r\rightarrow \infty} p \me^{-\left( p+q\right) r} \int_{0}^{r} \Lambda^{\left(p\right)} \left( x;s,q\right) \mathrm{d}s \notag \\
&=& \frac{q}{p+q} Z_p (x,0) + \lim_{r\rightarrow \infty} p \me^{-\left( p+q\right) r} \int_{0}^{r} \Lambda^{\left( p \right)} \left(x;s,q\right) \mathrm{d}s .
\end{eqnarray*}
Recall Kendall's identity (see \cite[Corollary VII.3]{bertoin1996}): on $(0,\infty) \times (0,\infty)$, we have
\begin{equation*}
r \p(\tau_z^+ \in \mathrm{d}r) \mathrm{d}z = z \p(X_r \in \mathrm{d}z) \mathrm{d}r .
\end{equation*}
Using Kendall's identity and Tonelli's theorem, we have
\begin{equation*}
\int_{0}^{r} \Lambda^{\left( p\right)} \left( x;s,q\right) \mathrm{d}s = \int_{0}^{\infty} \mathcal{W}_{z}^{\left( p+q,-q \right)} \left( x+z\right) \mathbb{P}\left( \tau_{z}^{+} \leq r\right) \mathrm{d}z .
\end{equation*}
Taking Laplace transforms in $r$ on both sides, together with~\eqref{LapscriptW} and~\eqref{Laptrans}, yields
\begin{eqnarray*}
\int_{0}^{\infty} \me^{-\theta r} \left( \int_{0}^{r} \Lambda^{\left( p\right)} \left( x;s,q\right) \mathrm{d}s \right) \mathrm{d}r &=& \frac{1}{\theta} \int_{0}^{\infty} \me^{-\Phi(\theta) z} \mathcal{W}_{z}^{\left( p+q, -q \right)} \left( x+z\right) \mathrm{d}z \\
&=& \frac{Z_{p+q} \left(x,\Phi(\theta) \right)}{\theta \left( \theta-p \right)} .
\end{eqnarray*}
Then, using the Final value theorem, we obtain
\begin{equation*}
\lim_{r\rightarrow \infty} \int_{0}^{r} \Lambda^{\left( p \right)} \left(x;s,q\right) \mathrm{d}s = \lim_{\theta \rightarrow 0} \frac{Z_{p+q}\left(x,\Phi(\theta) \right)}{\left( \theta-p \right)} = \frac{-Z_{p+q} \left(x,\Phi(0) \right)}{p+q} .
\end{equation*}

To prove that the identity in~\eqref{Laplace1} converges, as $r\rightarrow \infty$, to the identity in~\eqref{Parexit3}, it suffices to use the fact that
$$
\lim_{r\rightarrow \infty} \frac{\Lambda^{\left( p\right)} \left( x;r,q\right)}{\Lambda^{\left( p\right)} \left( b;r,q\right) } = \frac{Z_{p} \left(x,\Phi(p+q) \right)}{Z_{p} \left(b,\Phi(p+q) \right)} ,
$$
as shown above, together with~\eqref{eq:conv_Fscript}.

\subsubsection{Parisian ruin with deterministic delays}

It is straightforward to verify that our results are generalizing known identities for Parisian ruin with deterministic delays.

Indeed, in identity~\eqref{theo} of Theorem~\ref{maintheo}, if we take the limit when $q \to 0$, then we get
$$
\lim_{q \to 0} \mathbb{E}_{x} \left[ \mathrm{e}^{-p\kappa_{r}^{q} + \lambda X_{\kappa_{r}^{q}}} \mathbf{1}_{\left\{ \kappa_{r}^{q} < \tau_{b}^{+} \right\}} \right] = \R^{\left( p,\lambda \right)} \left( x,r\right) - \frac{\Lambda^{\left( p\right) }\left( x,r\right)}{\Lambda^{\left( p\right)} \left(b,r\right)} \R^{\left( p,\lambda \right)} \left( b,r \right) ,
$$
as already obtained in \cite{loeffenetal2017}, with a slightly different notation.
\begin{remark}
Using the same techniques as in Subsection~\ref{rem}, we can also show that identities in Theorem~\ref{maintheo} and Corollary~\ref{cor:proba_ruin} converge to the identities \eqref{exit1}, \eqref{exit2} and \eqref{E:classicalruinprobaX} related to classical ruin.
\end{remark}
\section{Examples}

We now present two classical models for which we can compute easily the probability of mixed Parisian ruin, as given in Corollary~\ref{cor:proba_ruin}. Note that, to use the formula in~\eqref{SNLPPr2}, one needs to have an expression for the $0$-scale function $W$ and the distribution of the underlying Lévy risk process $X$.

We will also verify that we recover the known expressions for the probability of Parisian ruin with exponentially distributed delays, i.e.\ without the deterministic component.

\subsection{Brownian risk process}

Let $X$ be a Brownian risk processes, i.e.\
$$
X_t - X_0 = \drift t +\sigma B_t ,
$$
where $B=\{B_t, t\geq 0\}$ is a standard Brownian motion.

In this case, for $x\geq 0$ and $q>0$, the scale functions are given by
\begin{align*}
W(x) &= \frac{1}{c} \left( 1-\mathrm{e}^{-2\frac{c}{\sigma^2} x} \right) ,\\
W^{(q)} (x) &= \frac{1}{\sqrt{c^{2}+2q}} \left( \mathrm{e}^{x \left( \sqrt{c^{2}+2q}-c\right)} - \mathrm{e}^{-x \left( \sqrt{c^{2}+2q}+c\right) }\right) ,\\
Z_q (x,0) &= \frac{q}{\sqrt{c^{2}+2q}}\left( \frac{\mathrm{e}^{x\left( \sqrt{c^{2}+2q}-c\right) }}{\sqrt{c^{2}+2q}-c}+\frac{\mathrm{e}^{-x\left( \sqrt{c^{2}+2q}+c\right)}}{\sqrt{c^{2}+2q}+c}\right) ,\\
\mathcal{W}_{z}^{(q,-q)} (x) &= \frac{Z_q (x,0)}{c} \left(1 - \mathrm{e}^{-2cx} \right) .
\end{align*}

\medskip

Thus,
\begin{eqnarray*}
\mathcal{W}_{z}^{\left(q,-q\right) }\left( x+z\right) &=&\frac{\mathrm{e}^{z\left( \sqrt{c^{2}+2q}-c\right) }}{\sqrt{c^{2}+2q}-c}+\frac{\mathrm{e}^{-z\left( \sqrt{c^{2}+2q}+c\right) }}{\sqrt{c^{2}+2q}+c}-\frac{\mathrm{e}^{-2cx}\mathrm{e}^{z\left( \sqrt{c^{2}+2q}-c\right) }}{\sqrt{c^{2}+2q}+c}-\frac{\mathrm{e}^{-2cx}\mathrm{e}^{-z\left( \sqrt{c^{2}+2q}+c\right) }}{\sqrt{c^{2}+2q}-c} \\
&=& A_1(x) \mathrm{e}^{z\left( \sqrt{c^{2}+2q}-c\right)} + A_2(x) \mathrm{e}^{-z\left( \sqrt{c^{2}+2q}+c\right) },
\end{eqnarray*}
where 
\begin{eqnarray*}
A_1 (x) &=&\frac{q}{c\sqrt{c^{2}+2q}\left( \sqrt{c^{2}+2q}-c\right) }-\frac{q\mathrm{e}^{-2cx}}{c\sqrt{c^{2}+2q}\left( \sqrt{c^{2}+2q}+c\right) } , \\
A_2(x) &=&\frac{q}{c\sqrt{c^{2}+2q}\left( \sqrt{c^{2}+2q}+c\right) }-\frac{q\mathrm{e}^{-2cx}}{c\sqrt{c^{2}+2q}\left( \sqrt{c^{2}+2q}-c\right) }.
\end{eqnarray*}
First, we need to compute the following quantity 
\begin{eqnarray}\label{numerator}
\e\left[ X_{1}\right] \int_{0}^{\infty }\mathcal{W}_{z}^{\left(q,-q\right)}\left( x+z\right) \frac{z}{r}\mathbb{P}\left( X_{r} \in \mathrm{d}z\right) .
\end{eqnarray}
Making the change of variable $y=\left( z-r\sqrt{c^{2}+2q}\right) /\sqrt{r}$, we have 
\begin{eqnarray}\label{1}
&&\frac{1}{\sqrt{2\pi r}}\int_{0}^{\infty }\mathrm{e}^{z\left( \sqrt{c^{2}+2q}-c\right) }\frac{z}{r}\mathrm{e}^{-\left( z-cr\right) ^{2}/\left( 2r\right)}\mathrm{d}z  \notag \\
&=& \frac{1}{\sqrt{2r\pi }}\mathrm{e}^{-rc^{2}/2}+e\,^{rq} \sqrt{c^{2}+2q} \mathcal{N}\left( \sqrt{r}\sqrt{c^{2}+2q}\right)  \notag \\
&=& \Psi_{1} \left( c,r,q\right) ,
\end{eqnarray}
and, setting $y=-\left( z+r\sqrt{c^{2}+2q}\right) /\sqrt{r}$, we get 
\begin{eqnarray}\label{2}
&&\frac{1}{\sqrt{2\pi r}}\int_{0}^{\infty }\mathrm{e}^{-z\left( \sqrt{c^{2}+2q}+c\right) }\frac{z}{r}\mathrm{e}^{-\left( z-cr\right) ^{2}/\left(2r\right) }\mathrm{d}z \notag \\
&=& \frac{1}{\sqrt{2r\pi }}\mathrm{e}^{-rc^{2}/2}- \mathrm e^{rq} \sqrt{c^{2}+2q} \mathcal{N}\left( -\sqrt{r}\sqrt{c^{2}+2q}\right)  \notag \\
&=& \Psi_{2}\left( c,r,q\right) ,
\end{eqnarray}
where $\mathcal{N}$ is the cumulative distribution of the standard normal distribution. Using~\eqref{1} and~\eqref{2} in~\eqref{numerator}, we obtain 
\begin{eqnarray*}
\mathbb{E}\left[ X_{1}\right] \int_{0}^{\infty }\mathcal{W}_{z}^{\left(q,-q\right) } \left( x+z\right) \frac{z}{r}\mathbb{P} \left( X_{r}\in \mathrm{d}z \right) &=& \frac{1}{\sqrt{2\pi r}}\int_{0}^{\infty }\mathcal{W}_{z}^{\left(0,q\right) }\left( x+z\right) z\mathrm{e}^{-\left( z-cr\right) ^{2}/\left(2r\right) }\mathrm{d}z \\
&=& A_1(x) \Psi_{1} \left( c,r,q\right) + A_2(x) \Psi_{2}\left( c,r,q\right) ,
\end{eqnarray*}
while the denominator is given by 
\begin{eqnarray*}
\int_{0}^{\infty } Z_{q}(z,0) \frac{z}{r}\mathbb{P} \left( X_{r}\in \mathrm{d}z\right) &=&\frac{q}{\sqrt{c^{2}+2q}}%
\int_{0}^{\infty }\left( \frac{\mathrm{e}^{z\left( \sqrt{c^{2}+2q}-c\right) }}{\sqrt{c^{2}+2q}-c}+\frac{\mathrm{e}^{-z\left( \sqrt{c^{2}+2q}+c\right) }}{\sqrt{c^{2}+2q}+c}\right) \frac{z}{r}\mathbb{P}\left( X_{r}\in \mathrm{d}z \right) \\
&=&\frac{q}{\left( \sqrt{c^{2}+2q}-c\right) \sqrt{c^{2}+2q}}\Psi _{1}\left(c,r,q\right) \\
&&+\frac{q}{\left( \sqrt{c^{2}+2q}+c\right) \sqrt{c^{2}+2q}}\Psi _{2}\left(c,r,q\right) .
\end{eqnarray*}
Putting all the terms together, we get 
\begin{equation*}
\p_{x} \left( \kappa_{r}^{q} < \infty \right) = 1 - \frac{A_1(x) \Psi_{1} \left(c,r,q\right) + A_2(x) \Psi_{2} \left( c,r,q\right)}{\frac{q}{\left( \sqrt{c^{2}+2q}-c\right) \sqrt{c^{2}+2q}} \Psi_{1} \left( c,r,q\right) + \frac{q}{\left( 
\sqrt{c^{2}+2q} + c \right) \sqrt{c^{2}+2q}} \Psi_{2} \left( c,r,q\right) }.
\end{equation*}

Note that since
\begin{equation*}
\lim_{r \rightarrow \infty} \frac{\Psi _{1}\left( c,r,q\right)}{\mathrm e^{rq}}= \sqrt{c^{2}+2q}
\end{equation*}
and 
\begin{equation*}
\lim_{r \rightarrow \infty} \frac{\Psi _{2}\left( c,r,q\right) }{\mathrm e^{rq}} = 0 ,
\end{equation*}
we recover 
\begin{equation*}
\lim_{r\rightarrow \infty} \mathbb{P}_{x}\left( \kappa _{r}^{q}<\infty \right) = \frac{\mathrm e^{-2xc}\left( \sqrt{c^{2}+2q}-c\right) }{\sqrt{c^{2}+2q}+c} = \mathbb{P}_{x}\left( \kappa^{q}<\infty \right) ,
\end{equation*}
which is the probability of Parisian ruin with exponentially distributed delays, as given in~\eqref{Pruine2}, for the Brownian risk model.

\subsection{Cramér-Lundberg process with exponential claims}

Let $X$ be a Cramér-Lundberg risk processes with exponentially distributed claims, i.e.\
$$
X_t - X_0 = \drift t - \sum_{i=1}^{N_t} C_i ,
$$
where $N=\{N_t, t\geq 0\}$ is a Poisson process with intensity $\eta>0$, and where $\{C_1, C_2, \dots\}$ are independent and exponentially distributed random variables with parameter $\alpha$. The Poisson process and the random variables are mutually independent.

In this case, for $x\geq 0$ and $q>0$, the scale functions are given by
\begin{align*}
W^{(q)} (x) &= \frac{1}{c\left( \Phi(q)-\theta_{q} \right)} \left( \frac{\mathrm e^{\Phi(q)x}}{\alpha + \Phi(q)} - \frac{\mathrm e^{\theta_{q}x}}{\alpha +\theta _{q}}\right) ,\\
Z_{q} (x,0) & =\frac{1}{c} \left( \frac{\left(q-c \theta_{q}\right) \mathrm e^{\Phi(q)x} + \left( q-c \Phi (q) \right) \mathrm e^{\theta_{q}x}}{\Phi(q)-\theta _{q}}\right) ,
\end{align*}
where 
\begin{eqnarray*}
\Phi(q) &=& \frac{1}{2c}\left( p+\lambda -c\alpha +\sqrt{\Delta _{q}}\right) , \\
\theta_{q} &=& \frac{1}{2c}\left( q+\lambda -c\alpha -\sqrt{\Delta _{q}}\right) , \\
\Delta_{q} &=& \left( q+\lambda -c\alpha \right) ^{2}+4c\alpha q .
\end{eqnarray*}
Then, for $x\geq z$, we get
\begin{eqnarray*}
\mathcal{W}_{z}^{(p,q)}(x) &=& q \frac{\alpha +\Phi(p+q)}{\sqrt{\Delta_{p+q} \Delta _{p}}} \mathrm e^{\Phi(p+q) (x-z)} \left[\frac{\alpha + \Phi(p)}{\Phi(p+q) - \Phi(p)} \mathrm e^{\Phi(p) z} - \frac{\alpha + \theta_{p}}{\Phi(p+q) - \theta_{p}} \mathrm e^{\theta_{p} z} \right] \\
&& - q \frac{\alpha + \theta_{p+q}}{\sqrt{\Delta_{p+q} \Delta_{p}}} \mathrm e^{\theta_{p+q} (x-z)} \left[ \frac{\alpha + \Phi(p)}{\theta_{p+q} - \Phi(p)} \mathrm e^{\Phi(p) z} - \frac{\alpha + \theta_{p}}{\theta_{p+q} -\theta_{p}} \mathrm e^{\theta_{p} z}\right] .
\end{eqnarray*}
and 
\begin{eqnarray*}
\mathcal{W}_{z}^{(q,-q)} (x+z) &=& q \frac{\alpha}{\sqrt{\Delta_{0} \Delta_{q}}} \left[ \frac{\alpha}{\Phi(q)} \mathrm e^{\Phi(q) z} - \frac{\alpha + \theta_{q}}{\theta_{q}} \mathrm e^{\theta_{p} z} \right] \\
&& + q \frac{\alpha + \theta_{0}}{\sqrt{\Delta_{0} \Delta_{q}}} \mathrm e^{\theta_{0} x} \left[ \frac{\alpha + \Phi(q)}{\theta_{0} - \Phi(q)} \mathrm e^{\Phi(q) z} - \frac{\alpha + \theta_{q}}{\theta_{0} - \theta_{q}} \mathrm e^{\theta_{p} z} \right] .
\end{eqnarray*}

As noted in \cite{loeffenetal2013}, we have 
\begin{multline*}
\mathbb{P} \left( \sum_{i=1}^{N_r}C_i\in\mathrm{d}y \right) =
\sum_{k=0}^\infty \mathbb{P }\left( \sum_{i=0}^k C_i\in\mathrm{d}y \right) \mathbb{P}(N_r=k) \\
= \mathrm{e}^{-\eta r} \left( \delta_0(\mathrm{d}y) + \mathrm{e}^{-\alpha y} \sum_{m=0}^\infty \frac{ (\alpha \eta r)^{m+1}}{m!(m+1)!} y^{m} \mathrm{d}y \right) ,
\end{multline*}
where $\delta_0(\mathrm{d}y)$ is a Dirac mass at $0$. We also have
\begin{align*}
\int_{0}^{\infty} \mathrm{e}^{f(q)}{\small z} & \mathbb{P} \left(X_{r}\in \mathrm{d}z\right) \\
&= \int_{0}^{\infty }\mathrm{e}^{f(q)z}{\small z}\mathrm{e}^{-\eta r}\left( \delta _{0}\left( cr-\mathrm{d}z\right) +\mathrm{e}^{-\alpha \left( cr-z\right) }\sum_{m=0}^{\infty }\frac{\left( \alpha \eta r\right) ^{m+1}}{m!\left( m+1\right) !}\left( cr-z\right) ^{m}\mathrm{d}z \right) \\
&= \mathrm{e}^{\left(f(q)c-\eta \right) r}\left(1 + \sum_{m=0}^{\infty }\frac{\left( \alpha \eta r\right) ^{m+1}}{m!\left(m+1\right) !}\int_{0}^{cr}\mathrm{e}^{-\left( \alpha +f(q)\right)
y}\left( cr-z\right) ^{m}\mathrm{d}z\right) \\
&= \mathrm{e}^{\left( f(q)c-\eta \right) r} + \mathrm{e}^{\left(f(q)c-\eta \right) r} \sum_{m=0}^{\infty} \frac{\left( \alpha \eta r\right)^{m+1}}{m!\left( m+1\right)!} \left[ c r \Gamma \left( m+1,cr \left( \alpha + f(q)\right) \right) \right. \\
& \qquad \left. - \frac{1}{\left( \alpha + f(q)\right) }\Gamma \left( m+2,cr\left( \alpha +f(q)\right) \right) \right] ,
\end{align*}
where $\Gamma(a,x)=\int_0^x \mathrm{e}^{-t}t^{a-1}\mathrm{d}t$ is the incomplete gamma function and
$f(q)$ is equal to either $\Phi(q)$ or $\theta_q$.

Putting all the pieces together, we obtain an expression for the probability of Parisian ruin with mixed delays.

\section{Intermediate results and proofs}

Before presenting the proofs of the main results, we need a few intermediate lemmas.


Recall that, for $\theta,r,q>0$ and $y\geq0$, we have
\begin{equation}\label{eq:lemmapart1}
\Lambda^{(q)} (0,r)= \mathrm{e}^{qr} ,
\end{equation}
\begin{equation}\label{eq:lemmapart2}
\int_0^\infty \me^{-\theta r} \Lambda^{(q)}(-y,r) \md r = \frac{\mathrm{e}^{-\Phi(\theta)y}}{\theta-q}
\end{equation}
and
\begin{equation}\label{eq:lemmapart3}
\int_0^\infty \mathrm{e}^{-\theta r} \int_{y}^\infty \frac{z}{r} \mathbb P \left(X_r\in\mathrm{d}z \right) \mathrm{d}r = \frac{1}{\Phi(\theta)} \mathrm{e}^{-\Phi(\theta) y} .
\end{equation}
See e.g.\ \cite{lkabousetal2016} for proofs of the above three equations.

\medskip

The next two lemmas are the reasons our main results can be expressed explicitly in terms of scale functions.

To prove Lemma~\ref{mainlemma} below, we will need first to prove Lemma~\ref{lem2} which provides a solution to the \textit{race} between the mixed clock and the underlying process trying to get back above zero. Despite the similarities with \cite[Lemma 4.2 and Lemma 4.3]{loeffenetal2017}, we will take another direct and simple approach.

\begin{lemma}\label{lem2}
For $x\leq 0$, $p,\lambda \geq 0$ and $q,r>0,$ we have 
\begin{equation}\label{L1}
\e_{x}\left[ \mathrm{e}^{-p\tau _{0}^{+}} \mathbf{1}_{\left\{ \tau_{0}^{+} \leq \me_{q} \wedge r \right\} } \right] =\me^{-\left(p+q\right) r} \Lambda^{\left(p+q\right) } \left(x,r\right)
\end{equation}
and
\begin{eqnarray}\label{L33}
\e_{x}\left[ \mathrm{e}^{-p\left( \me_{q}\wedge r\right) +\lambda X_{
\me_{q}\wedge r}}\mathbf{1}_{\left\{ \tau _{0}^{+}>\me_{q}\wedge r\right\} }
\right]  &=&\frac{\me^{\lambda x}}{
\psi_{p+q}(\lambda)}\left(\psi_p\left( \lambda \right) \me^{\psi_{p+q}(\lambda)r}-q\right) \nonumber \\
&&-\me^{\psi_{p+s}(\lambda)r}\psi_p \left( \lambda \right) \int_{0}^{r}\me^{-\psi \left( \lambda \right) s}\Lambda ^{\left( p+q\right) }\left(x,s\right) 
\mathrm{d}s \notag \\
 &&-\me^{-\left( p+q\right)
r}\Lambda ^{\left( p+q\right) }\left( x,r\right),
\end{eqnarray}
where, in the case $\lambda = \Phi(p+q)$, the ratio $\frac{\psi_p\left( \lambda \right) \me^{\psi_{p+q}(\lambda)r}-q}{
\psi_{p+q}(\lambda)}$ is understood in the limiting sense, i.e.\
$$
\lim_{\lambda \rightarrow \Phi(p+q)}\frac{\psi_p\left( \lambda \right) \me^{\psi_{p+q}(\lambda)r}-q}{
\psi_{p+q}(\lambda)} = 1+qr .
$$
\end{lemma}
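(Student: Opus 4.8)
The plan is to prove both identities by first conditioning out the independent exponential clock $\me_q$ and then identifying each side through its Laplace transform in the deterministic horizon $r$, whose uniqueness lets me match with the claimed expressions. For \eqref{L1} I would condition on $\mathcal{F}_{\tau_0^+}$ and use the independence of $\me_q$: since $\p(\me_q\geq t)=\me^{-qt}$, the event $\{\tau_0^+\leq\me_q\wedge r\}$ produces the factor $\me^{-q\tau_0^+}\ind_{\{\tau_0^+\leq r\}}$, so the left-hand side equals $L(x,r):=\e_x[\me^{-(p+q)\tau_0^+}\ind_{\{\tau_0^+\leq r\}}]$. A Fubini computation then gives $\int_0^\infty \me^{-\theta r}L(x,r)\,\md r=\theta^{-1}\e_x[\me^{-(p+q+\theta)\tau_0^+}]=\theta^{-1}\me^{\Phi(p+q+\theta)x}$, using the standard upward first-passage transform $\e_x[\me^{-s\tau_0^+}]=\me^{\Phi(s)x}$ valid for $x\leq 0$. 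Since \eqref{eq:lemmapart2} (with $q$ replaced by $p+q$) yields $\int_0^\infty\me^{-\theta r}\me^{-(p+q)r}\Lambda^{(p+q)}(x,r)\,\md r=\me^{\Phi(p+q+\theta)x}/\theta$, the two transforms coincide and \eqref{L1} follows.

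For \eqref{L33}, write $T=\me_q\wedge r$ and $g(t)=\e_x[\me^{\lambda X_t}\ind_{\{t<\tau_0^+\}}]$. Splitting according to whether $\me_q<r$ or $\me_q\geq r$ and integrating the exponential density gives
\[\e_x\!\left[\me^{-pT+\lambda X_T}\ind_{\{\tau_0^+>T\}}\right]=q\int_0^r\me^{-(p+q)t}g(t)\,\md t+\me^{-(p+q)r}g(r).\]
The key step is to evaluate $g$. I would apply the strong Markov property at $\tau_0^+$ together with the absence of positive jumps, so that $X$ creeps upward and $X_{\tau_0^+}=0$ on $\{\tau_0^+<\infty\}$; this annihilates the overshoot and gives $\e_x[\me^{\lambda X_t}\ind_{\{t\geq\tau_0^+\}}]=\me^{t\psi(\lambda)}\e_x[\me^{-\psi(\lambda)\tau_0^+}\ind_{\{\tau_0^+\leq t\}}]$. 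Subtracting this from $\e_x[\me^{\lambda X_t}]=\me^{\lambda x+t\psi(\lambda)}$ yields a closed form for $g$, and a Laplace transform in $t$ (Fubini again) collapses it to the compact expression $\tilde g(\alpha)=(\me^{\lambda x}-\me^{\Phi(\alpha)x})/(\alpha-\psi(\lambda))$, valid for $\alpha$ large and then by analytic continuation.

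It then remains to match Laplace transforms in $r$. The decomposition gives $\int_0^\infty\me^{-\theta r}\e_x[\,\cdots\,]\,\md r=\tfrac{\theta+q}{\theta}\,\tilde g(\theta+p+q)$, and since $(\theta+p+q)-\psi(\lambda)=\theta-\psi_{p+q}(\lambda)$ this equals $(\theta+q)(\me^{\lambda x}-\me^{\Phi(\theta+p+q)x})/[\theta(\theta-\psi_{p+q}(\lambda))]$. On the other side I would transform the three terms of the claimed right-hand side separately: the term in $\me^{\psi_{p+q}(\lambda)r}$ is elementary, the last term is handled by \eqref{eq:lemmapart2}, and for $\int_0^r\me^{-\psi(\lambda)s}\Lambda^{(p+q)}(x,s)\,\md s$ a Fubini step followed by \eqref{eq:lemmapart2} (the exponents combining to $\theta+p+q$) does the job. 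Collecting the coefficients of $\me^{\lambda x}$ and of $\me^{\Phi(\theta+p+q)x}$ and using $\psi_p(\lambda)=\psi_{p+q}(\lambda)+q$, both reduce to $\pm(\theta+q)/[\theta(\theta-\psi_{p+q}(\lambda))]$, so the transforms agree and uniqueness gives \eqref{L33}. The degenerate case $\lambda=\Phi(p+q)$, where $\psi_{p+q}(\lambda)=0$ and $\psi_p(\lambda)=q$, is recovered by a first-order expansion: with $\epsilon=\psi_{p+q}(\lambda)\to 0$ one has $((\epsilon+q)\me^{\epsilon r}-q)/\epsilon\to 1+qr$.

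I expect the evaluation of $g(t)$ to be the crux: everything hinges on exploiting spectral negativity so that $X_{\tau_0^+}=0$, which removes the overshoot and produces the clean transform $\tilde g$. Once this is secured, the remainder is a routine, if bookkeeping-heavy, verification that the two Laplace transforms in $r$ coincide, together with the elementary limiting argument at $\lambda=\Phi(p+q)$.
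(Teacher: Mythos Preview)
Your proof is correct and follows the same overall strategy as the paper: identify both sides via their Laplace transform in $r$ and invoke \eqref{eq:lemmapart2} for the inversion. The one notable difference is in how you reach the intermediate transform $\tilde g(\alpha)=(\me^{\lambda x}-\me^{\Phi(\alpha)x})/(\alpha-\psi(\lambda))$ in the proof of \eqref{L33}. The paper replaces $r$ by an independent exponential $\me_\theta$ (so that $\me_q\wedge r$ becomes exponential with rate $q+\theta$) and then appeals directly to the killed potential measure \eqref{potmeas}; you instead split on $\{\me_q<r\}$ versus $\{\me_q\geq r\}$, isolate the deterministic-time functional $g(t)=\e_x[\me^{\lambda X_t}\ind_{\{t<\tau_0^+\}}]$, and compute it by the strong Markov property at $\tau_0^+$ together with the upward-creeping property $X_{\tau_0^+}=0$. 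Your route is slightly more elementary in that it avoids quoting the potential-measure identity and uses only the Laplace exponent and spectral negativity; the paper's route is shorter once one is willing to invoke \eqref{potmeas}. Both arrive at the same Laplace transform $(\theta+q)(\me^{\lambda x}-\me^{\Phi(\theta+p+q)x})/[\theta(\theta-\psi_{p+q}(\lambda))]$, after which the inversion and the limiting check at $\lambda=\Phi(p+q)$ are identical.
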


\begin{proof}
We can extract from the proof of \cite[Lemma 8]{lkabousetal2016}, the following identity:
$$
\e_x \left[ \me^{-p \tau _{0}^{+}} \ind_{\left\{ \tau _{0}^{+} \leq r \right\}} \right] = \int_{0}^{\infty} \me^{-p r} W^{(p)} \left( x+z\right) \frac{z}{r} \p(X_{r}\in \md z) .
$$
Note that the term on the right-hand side is equal to $\me^{-p r} \Lambda^{(p)} (x,r)$. Then, the result in Equation~\eqref{L1} follows from this last identity together with the independence between $\me_q$ and the underlying L\'evy risk process $X$.

For $\theta>0$, using the potential measure in~\eqref{potmeas}, we have
\begin{align*}
\int_{0}^{\infty }\me^{-\theta r} & \mathbb{E}_{x} \left[ \mathrm{e}^{-p\left( \me_{q}\wedge r\right) +\lambda X_{\me_{q}\wedge r}}\mathbf{1}_{\left\{ \tau_{0}^{+}>\me_{q}\wedge r\right\} }\right] \mathrm{d}r
= \frac{1}{\theta} \e_{x} \left[ \mathrm{e}^{-p\left( \me_{q}\wedge \me_{\theta }\right) + \lambda X_{\me_{q}\wedge \me_{\theta}}}\mathbf{1}_{\left\{ \tau _{0}^{+}>\me_{q}\wedge \me_{\theta }\right\} }\right]  \\
&= \me^{\Phi \left( p+q+\theta \right) x}\frac{\left( q+\theta \right) }{\theta }\int_{-\infty }^{0}\me^{\lambda y}W^{\left( p+q+\theta \right)}\left( -y\right) \mathrm{d}y - \frac{\left( q+\theta \right) }{\theta }\int_{-\infty }^{0}\me^{\lambda y}W^{\left( p+q+\theta \right) }\left( x-y\right) \mathrm{d}y \\
&= \left( \frac{\psi_p(\lambda)}{\theta \left(\psi_{p+q+\theta}(\lambda) \right)} - \frac{1}{\theta} \right) \times \left( \mathrm{e}^{\Phi(\theta+p+q) x} - \mathrm{e}^{\lambda x} \right) ,
\end{align*}
where, in the last equality, we used~\eqref{def_scale} for $\lambda>\Phi(p+q+\theta)$. Then, by Laplace inversion, where identity~\eqref{eq:lemmapart2} is needed, and further simplifications the result in~\eqref{L33} follows.
\end{proof}

\begin{lemma}\label{mainlemma}
For $x\in \reals$, $p,\lambda \geq 0$ and $b,q,r>0$, we have
\begin{multline}\label{L4}
\mathbb{E}_{x}\left[ \mathrm{e}^{-p\tau _{0}^{-}}\mathbb{E}_{X_{_{\tau_{0}^{-}}}}\left[ \mathrm{e}^{-p\tau _{0}^{+}}\mathbf{1}_{\left\{ \tau_{0}^{+}\leq \me_{q}\wedge r\right\} }\right] \mathbf{1}_{\left\{ \tau_{0}^{-}<\tau _{b}^{+}\right\} }\right] \\
= \me^{-\left( p+q\right) r}\left( \Lambda^{\left( p\right)} \left(x;r,q\right) - \frac{W^{\left( p\right)} \left( x\right)}{W^{\left( p\right)} \left( b\right)}\Lambda^{\left( p\right)} \left(b;r,q\right) \right)
\end{multline}
and
\begin{multline}\label{L5}
\mathbb{E}_{x}\left[ \mathrm{e}^{-p\tau _{0}^{-}}\mathbb{E}_{X_{_{\tau_{0}^{-}}}}\left[ \mathrm{e}^{-p\left( \me_{q}\wedge r\right) +\lambda X_{\me_{q}\wedge r}}\mathbf{1}_{\left\{ \tau _{0}^{+}>\me_{q}\wedge r\right\} } \right] \mathbf{1}_{\left\{ \tau _{0}^{-}<\tau _{b}^{+}\right\} }\right]  \\
= \left( \R^{\left( p,\lambda \right) }\left( x;r,\lambda\right) -\me^{-\left( p+q\right) r}\Lambda^{\left( p\right)} \left(x;r,q\right)\right) \\
- \frac{W^{\left( p\right) }\left( x\right) }{W^{\left( p\right)} \left( b\right) }\left( \R^{\left( p,\lambda \right) } \left(b;r,q\right) -\me^{-\left( p+q\right) r} \Lambda^{\left( p\right)} \left(b;r,q\right) \right) .
\end{multline}
\end{lemma}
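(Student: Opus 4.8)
The plan is to reduce both identities to a single Gerber--Shiu-type identity by substituting the results of Lemma~\ref{lem2} directly into the inner conditional expectations. On $\{\tau_0^-<\infty\}$ we have $X_{\tau_0^-}\le 0$, so the inner expectation in~\eqref{L4} is exactly the left-hand side of~\eqref{L1} evaluated at the (nonpositive) starting point $X_{\tau_0^-}$, and hence equals $\me^{-(p+q)r}\Lambda^{(p+q)}(X_{\tau_0^-},r)$; likewise the inner expectation in~\eqref{L5} is the right-hand side of~\eqref{L33} evaluated at $X_{\tau_0^-}$. Thus both statements are instances of the linear operator $T[h]:=\e_x\!\left[\me^{-p\tau_0^-}h(X_{\tau_0^-})\mathbf{1}_{\{\tau_0^-<\tau_b^+\}}\right]$ applied to explicit functions $h$ built from the two blocks $y\mapsto\me^{\lambda y}$ and $y\mapsto\Lambda^{(p+q)}(y,s)$, so it suffices to evaluate $T$ on each block.

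The first block is handled by the classical identity~\eqref{exit2}, which gives $T[\me^{\lambda\cdot}]=Z_p(x,\lambda)-\frac{W^{(p)}(x)}{W^{(p)}(b)}Z_p(b,\lambda)$. The crux is the second block, namely the claim
\[
T\big[\Lambda^{(p+q)}(\cdot,s)\big]=\Lambda^{(p)}(x;s,q)-\frac{W^{(p)}(x)}{W^{(p)}(b)}\Lambda^{(p)}(b;s,q).
\]
I would prove this by first establishing the pointwise identity
\[
\e_x\!\left[\me^{-p\tau_0^-}W^{(p+q)}(X_{\tau_0^-}+z)\mathbf{1}_{\{\tau_0^-<\tau_b^+\}}\right]=\mathcal W_x^{(p,q)}(x+z)-\frac{W^{(p)}(x)}{W^{(p)}(b)}\mathcal W_b^{(p,q)}(b+z),\qquad z>0,
\]
and then integrating it against $\frac{z}{s}\,\p(X_s\in\md z)$ and invoking the representation~\eqref{Exp2} of $\Lambda^{(p)}(\cdot;s,q)$. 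To prove the pointwise identity I take Laplace transforms in $z$ for $\theta>\Phi(p+q)$: since $W^{(p+q)}$ vanishes on the negative half-line,~\eqref{def_scale} gives $\int_0^\infty\me^{-\theta z}W^{(p+q)}(y+z)\md z=\me^{\theta y}/\psi_{p+q}(\theta)$ for $y\le 0$, so the left-hand transform equals $\psi_{p+q}(\theta)^{-1}\e_x\!\left[\me^{-p\tau_0^-+\theta X_{\tau_0^-}}\mathbf{1}_{\{\tau_0^-<\tau_b^+\}}\right]$, which by~\eqref{exit2} is $\psi_{p+q}(\theta)^{-1}\big(Z_p(x,\theta)-\tfrac{W^{(p)}(x)}{W^{(p)}(b)}Z_p(b,\theta)\big)$; by~\eqref{LapscriptW} this is precisely the transform of the right-hand side. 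Uniqueness of Laplace transforms, both sides being continuous in $z$, then yields the pointwise identity.

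With these two evaluations in hand,~\eqref{L4} is immediate, being $\me^{-(p+q)r}$ times the second-block identity at $s=r$. For~\eqref{L5} I apply $T$ term by term to the right-hand side of~\eqref{L33}: the $\me^{\lambda\cdot}$ term produces the $Z_p$ contributions, the $\Lambda^{(p+q)}(\cdot,s)$ term under the integral $\int_0^r\md s$ and the final $\Lambda^{(p+q)}(\cdot,r)$ term produce the $\Lambda^{(p)}(\cdot;s,q)$ contributions, and regrouping according to the definition of $\R^{(p,\lambda)}(\cdot;r,q)$ gives the stated expression (the apparent $\R^{(p,\lambda)}(x;r,\lambda)$ on the right of~\eqref{L5} being $\R^{(p,\lambda)}(x;r,q)$).

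The main obstacle is the Laplace-transform identity above, together with the justification of the Fubini/Tonelli interchanges: swapping $T$ with the $z$-integral defining $\Lambda^{(p+q)}$, with the $\int_0^r\md s$ integral, and swapping the Laplace integral with the expectation. These are legitimate since all integrands are nonnegative ($W^{(p+q)}\ge 0$, $z\ge 0$) and the relevant expectations are finite (note $\me^{\theta X_{\tau_0^-}}\le 1$ for $\theta\ge 0$ as $X_{\tau_0^-}\le 0$). The only remaining delicate point is the degenerate case $\lambda=\Phi(p+q)$ in~\eqref{L33}, which is absorbed by the limiting value recorded in Lemma~\ref{lem2}.
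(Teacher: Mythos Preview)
Your proof is correct and follows essentially the same strategy as the paper's: substitute Lemma~\ref{lem2} into the inner expectations and then evaluate the resulting Gerber--Shiu functional $T$ on the building blocks $\me^{\lambda\cdot}$ and $\Lambda^{(p+q)}(\cdot,s)$. The only difference is in the derivation of the pointwise identity
\[
\e_x\!\left[\me^{-p\tau_0^-}W^{(p+q)}(X_{\tau_0^-}+z)\mathbf{1}_{\{\tau_0^-<\tau_b^+\}}\right]=\mathcal W_x^{(p,q)}(x+z)-\frac{W^{(p)}(x)}{W^{(p)}(b)}\,\mathcal W_b^{(p,q)}(b+z):
\]
the paper obtains it directly from~\eqref{id1} (with a spatial shift, using that $\mathcal W_z^{(p+q,-q)}(x+z)=\mathcal W_x^{(p,q)}(x+z)$), whereas you rederive it via Laplace transforms in $z$ combined with~\eqref{exit2} and~\eqref{LapscriptW}; both routes are valid and lead to the same computation.
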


\begin{proof}
The proof consists in using Lemma~\ref{lem2} to compute the inner expectations and then use the following relationship:
$$
\e_x \left[ \mathrm{e}^{-p \tau_0^-} \Lambda^{(p+q)} \left(X_{\tau_0^-},r \right) \ind_{\{\tau_0^- < \tau_b^+\}} \right] = \Lambda^{(p)} (x;r,q) - \frac{W^{(p)}(x)}{W^{(p)}(b)} \Lambda^{(p)} (b;r,q) .
$$
This is proved with the result in Equation~\eqref{id1}.

Identity~\eqref{exit1} is also needed to complete the proof of~\eqref{L5}. The details are left to the reader.
\end{proof}

\subsection{Proof of Theorem~\ref{maintheo}}

The steps of the proof of Theorem~\ref{maintheo} is based on the new Lemma~{mainlemma} together with standard probabilistic decompositions.

For $x<0$, using the strong Markov property of the fact that $X$ is skip-free upward, we get 
\begin{eqnarray}  \label{xNegb}
\mathbb{E}_{x}\left[ \mathrm{e}^{-p\kappa _{r}^{q}+\lambda X_{\kappa
_{r}^{q}}}\mathbf{1}_{\left\{ \kappa _{r}^{q}<\tau _{b}^{+}\right\} }\right]
&=&\mathbb{E}_{x}\left[ \mathrm{\ e}^{-p\left( \me_{q}\wedge r\right)
+\lambda X_{\me_{q}\wedge r}}\mathbf{1}_{\left\{ \tau _{0}^{+}>\me_{q}\wedge
r\right\} }\right]\notag \\
&+&\mathbb{E}_{x}\left[ \mathrm{e}^{-p\tau _{0}^{+}}\mathbf{1}_{\left\{ \tau
_{0 }^{+}\leq \me_{q}\wedge r\right\} }\right] \mathbb{E}\left[ 
\mathrm{e}^{-p\kappa _{r}^{q}+\lambda X_{\kappa _{r}^{q}}}\mathbf{1}%
_{\left\{ \kappa _{r}^{q}<\tau _{b}^{+}\right\} }\right].
\end{eqnarray}
Now, for $x\geq 0$, using again the strong Markov property, we get 
\begin{equation*}
\mathbb{E}_{x}\left[ \mathrm{e}^{-p\kappa _{r}^{q}+\lambda X_{\kappa
_{r}^{q}}}\mathbf{1}_{\left\{ \kappa _{r}^{q}<\tau _{b}^{+}\right\} }\right]
=\mathbb{E}_{x}\left[ \mathrm{e}^{-p\tau _{0}^{-}}\mathbb{E}_{X_{_{\tau
_{0}^{-}}}}\left[ \mathrm{e}^{-p\kappa _{r}^{q}+\lambda X_{\kappa _{r}^{q}}}%
\mathbf{1}_{\left\{ \kappa _{r}^{q}<\tau _{b}^{+}\right\} }\right] \mathbf{1}%
_{\left\{ \tau _{0}^{-}<\tau _{b}^{+}\right\} }\right] .
\end{equation*}
Injecting~\eqref{xNegb} in this last expectation, we obtain
\begin{align}\label{laplaceallx}
& \mathbb{E}_{x}\left[ \mathrm{e}^{-p\kappa _{r}^{q}+\lambda X_{\kappa
_{r}^{q}}}\mathbf{1}_{\left\{ \kappa _{r}^{q}<\tau _{b}^{+}\right\} }\right]
=\mathbb{E}_{x}\left[ \mathrm{e}^{-p\tau _{0}^{-}}\mathbb{E}_{X_{_{\tau
_{0}^{-}}}}\left[ \mathrm{e}^{-p\left( \me_{q}\wedge r\right) +\lambda X_{\me%
_{q}\wedge r}}\mathbf{1}_{\left\{ \tau _{0}^{+}>\me_{q}\wedge r\right\} }%
\right] \mathbf{1}_{\left\{ \tau _{0}^{-}<\tau _{b}^{+}\right\} }\right] 
\notag \\
& \qquad +\mathbb{E}_{x}\left[ \mathrm{e}^{-p\tau _{0}^{-}}\mathbb{E}%
_{X_{_{\tau _{0}^{-}}}}\left[ \mathrm{e}^{-p\tau _{0}^{+}}\mathbf{1}%
_{\left\{ \tau _{0}^{+}\leq \me_{q}\wedge r\right\} }\right] \mathbf{1}%
_{\left\{ \tau _{0}^{-}<\tau _{b}^{+}\right\} }\right] \mathbb{E}\left[ 
\mathrm{e}^{-p\kappa _{r}^{q}+\lambda X_{\kappa _{r}^{q}}}\mathbf{1}%
_{\left\{ \kappa _{r}^{q}<\tau _{b}^{+}\right\} }\right].
\end{align}
Note that this decomposition holds for all $x \in \reals$.

\medskip

We will first prove the result in~\eqref{theo} for $x=0$. We split this part of the proof in two steps: for processes with paths of bounded variation (BV) and then for processes with paths of unbounded variation (UBV).

First, we assume that $X$ has paths of BV. Setting $x=0$ in~\eqref{laplaceallx} yields 
\begin{equation*}
\mathbb{E}\left[ \mathrm{e}^{-p\kappa _{r}^{q}+\lambda X_{\kappa _{r}^{q}}}%
\mathbf{1}_{\left\{ \kappa _{r}^{q}<\tau _{b}^{+}\right\} }\right] =\frac{%
\mathbb{E}\left[ \mathrm{e}^{-p\tau _{0}^{-}}\mathbb{E}_{X_{_{\tau
_{0}^{-}}}}\left[ \mathrm{e}^{-p\left( \me_{q}\wedge r\right) +\lambda X_{\me%
_{q}\wedge r}}\mathbf{1}_{\left\{ \tau _{0}^{+}>\me_{q}\wedge r\right\} }%
\right] \mathbf{1}_{\left\{ \tau _{0}^{-}<\tau _{b}^{+}\right\} }\right] }{1-%
\mathbb{E}\left[ \mathrm{e}^{-p\tau _{0}^{-}}\mathbb{E}_{X_{_{\tau
_{0}^{-}}}}\left[ \mathrm{e}^{-p\tau _{0}^{+}}\mathbf{1}_{\left\{ \tau
_{0}^{+}\leq \me_{q}\wedge r\right\} }\right] \mathbf{1}_{\left\{ \tau
_{0}^{-}<\tau _{b}^{+}\right\} }\right] } .
\end{equation*}
Using~\eqref{eq:lemmapart1} and~\eqref{L5}, the numerator can be written as
\begin{multline}
\mathbb{E}\left[ \mathrm{e}^{-p\tau _{0}^{-}}\mathbb{E}_{X_{_{\tau
_{0}^{-}}}}\left[ \mathrm{e}^{-p\left( \me_{q}\wedge r\right) +\lambda X_{\me%
_{q}\wedge r}}\mathbf{1}_{\left\{ \tau _{0}^{+}>\me_{q}\wedge r\right\} }%
\right] \mathbf{1}_{\left\{ \tau _{0}^{-}<\tau _{b}^{+}\right\} }\right]  \\
=-\frac{W^{\left( p\right) }\left( 0\right) }{W^{\left( p\right) }\left(
b\right) }\left( \R^{\left( p,\lambda \right) }\left( b;r,q\right) -%
\me^{-\left( p+q\right) r}\Lambda ^{\left( p\right) }\left( b;r,q\right)\right)
\end{multline}
while the denominator can be written as
\begin{multline}
1-\e\left[ \mathrm{e}^{-p\tau _{0}^{-}}\mathbb{E}_{X_{_{\tau
_{0}^{-}}}}\left[ \mathrm{e}^{-p\tau _{0}^{+}}\mathbf{1}_{\left\{ \tau
_{0}^{+}\leq \me_{q}\wedge r\right\} }\right] \mathbf{1}_{\left\{ \tau
_{0}^{-}<\tau _{b}^{+}\right\} }\right]  \\
=1-\me^{-\left( p+q\right) r}\left( \Lambda ^{\left( p\right) }\left(
0;r,q\right) -\frac{W^{\left( p\right) }\left( 0\right) }{W^{\left( p\right)
}\left( b\right) }\Lambda ^{\left( p\right) }\left(b;r,q\right) \right)  \\
= \frac{W^{\left( p\right) }\left( 0\right) }{W^{\left( p\right) } \left( b\right) } \me^{-\left( p+q\right) r} \Lambda^{\left( p\right)} \left( b;r,q \right) ,
\end{multline}
where in the last equality we used~\eqref{eq:lemmapart1}. Note that, since $X$ is assumed to be of BV, we have $W(0)>0$. Consequently, we have obtained
\begin{equation}\label{x=0}
\mathbb{E}\left[ \mathrm{e}^{-p\kappa _{r}^{q} +\lambda X_{\kappa _{r}^{q}}} \mathbf{1}_{\left\{ \kappa
_{r}^{q}<\tau _{b}^{+}\right\} }\right] =1-\frac{\R^{\left(
p,\lambda \right) }\left( b;r,q\right) }{\me^{-\left( p+q\right) r}\Lambda^{\left( p\right)} \left( b;r,q \right) }.
\end{equation}

Now, we assume $X$ has paths of UBV. Let us approximate the situation as follows (as in \cite{loeffenetal2013}). We denote by $\kappa_{r,\epsilon}^q$ the first time an excursion, starting when $X$ gets below zero and ending before $X$ gets back up to $\epsilon$, is longer than $\me_{q} \wedge r$. Mathematically,
\begin{equation*}
\kappa_{r,\epsilon }^{q}=\inf \left\{ t>0:t-g_{t}^{\epsilon } > \left( \me_{q}\wedge r\right) , X_{t-\left( \me_{q}\wedge r \right) } < 0 \right\} .
\end{equation*}
Using similar arguments as in the BV case, we can write
\begin{align*}
\mathbb{E}_{\epsilon} & \left[ \mathrm{e}^{-p\kappa _{r,\epsilon }^{q} + \lambda X_{\kappa _{r,\epsilon }^{q}}}\mathbf{1}_{\left\{ \kappa _{r,\epsilon}^{q} < \tau _{b}^{+} \right\}} \right]
= \frac{\mathbb{E}_{\epsilon}\left[ \mathrm{e}^{-p\tau _{0}^{-}}\mathbb{E}_{X_{_{\tau _{0}^{-}}}}\left[ \mathrm{e}%
^{-p\left( \me_{q}\wedge r\right) +\lambda X_{\me_{q}\wedge r}} \mathbf{1}_{\left\{ \tau _{\epsilon }^{+}>\me_{q}\wedge r\right\} }\right] \mathbf{1}_{\left\{ \tau _{0}^{-}<\tau _{b}^{+}\right\} }\right] }{1-\mathbb{E}_{\epsilon }\left[ \mathrm{e}^{-p\tau _{0}^{-}}\mathbb{E}_{X_{_{\tau_{0}^{-}}}}\left[ \mathrm{e}^{-p\tau _{\epsilon }^{+}}\mathbf{1}_{\left\{\tau _{\epsilon }^{+}\leq \me_{q}\wedge r\right\} }\right] \mathbf{1}_{\left\{ \tau _{0}^{-}<\tau _{b}^{+}\right\} }\right] } \\
&= \frac{\R_{\epsilon }^{\left( p,\lambda \right) }\left( \epsilon;r,q\right) - \mathrm e^{-\left( p+q\right) r}\Lambda_{\epsilon }^{\left( p\right)} \left( 0;r,q\right) - \frac{W^{\left( p\right)} \left( \epsilon \right)}{W^{\left( p\right)} \left( b\right)} \left( \R^{\left( p,\lambda\right)} \left( b;r,q\right) - \mathrm e^{-\left( p+q\right) r} \Lambda_{\epsilon}^{\left( p\right)} \left( b-\epsilon;r,q\right) \right)}{1 - \mathrm e^{-\left( p+q\right) r}\left( \Lambda_{\epsilon }^{\left( p\right)} \left( 0;r,q\right) - \frac{W^{\left( p\right)} \left( \epsilon \right) }{W^{\left( p\right) }\left( b\right) }\Lambda _{\epsilon }^{\left( p\right)} \left( b-\epsilon;r,q\right) \right) } ,
\end{align*}
where, from~\eqref{Exp1}, we define temporarily
\begin{equation*}
\Lambda_{\epsilon }^{\left( p\right)} \left( x,r,q\right) = \int_{\epsilon }^{\infty} \mathcal{W}_{z-\epsilon}^{\left(p+q,-q\right)} \left( x+z\right) \frac{z}{r} \mathbb{P} \left( X_{r} \in \mathrm{d}z \right) .
\end{equation*}

We will now compute the limit, as $\epsilon \rightarrow 0$, of the denominator and the numerator with an appropriate scaling. We can write
\begin{multline*}
\frac{1-\mathrm{e}^{-\left( p+q\right) r} \left( \Lambda_{\epsilon }^{\left( p\right) }\left( 0;r,r\right) -\frac{W^{\left( p\right) }\left( \epsilon \right) }{W^{\left( p\right) }\left(b\right) }\Lambda _{\epsilon }^{\left( p\right) }\left( b-\epsilon;r,q\right) \right)}{W^{\left( p\right) }\left( \epsilon \right)} \\
= \frac{1-\mathrm{e}^{-\left( p+q\right) r} \Lambda_{\epsilon }^{\left( p\right) } \left( 0;r,q\right)}{W^{\left(
p\right)} \left( \epsilon \right) } + \frac{\mathrm{e}^{-(p+q)r} \Lambda^{(p)}_\epsilon (b-\epsilon;r,q)}{W^{(p)} (b)} ,
\end{multline*}
where, using~\eqref{w-script_second-def}, we have
\begin{align*}
\frac{1-\mathrm{e}^{-\left( p+q\right) r} \Lambda_{\epsilon }^{\left( p\right) } \left( 0;r,q\right)}{W^{\left(
p\right)} \left( \epsilon \right) } &= \frac{1 - \mathrm{e}^{-(p+q)r} \int_\epsilon^\infty W^{(p+q)} (z) \frac{z}{r} \mathbb{P} \left( X_{r} \in \mathrm{d}z \right)}{W^{(p)}(\epsilon)} \\
& \qquad + q \mathrm e^{-(p+q)r} \frac{\int_{\epsilon}^{\infty} \left[ \int_{z-\epsilon}^{z} W^{(p)} (z-y) W^{(p+q)} (y) \md y \right] \frac{z}{r} \mathbb{P} \left( X_{r} \in \mathrm{d}z \right)}{W^{(p)}(\epsilon)} .
\end{align*}
We will show that this last expression converges to zero. First, using~\eqref{eq:lemmapart1} and then using the fact that $W^{(p+q)}$ is an increasing function, we can write
\begin{align*}
\frac{1 - \mathrm{e}^{-(p+q)r} \int_\epsilon^\infty W^{(p+q)} (z) \frac{z}{r} \mathbb{P} \left( X_{r} \in \mathrm{d}z \right)}{W^{(p)}(\epsilon)} &= \frac{\mathrm{e}^{-(p+q)r} \int_0^\epsilon W^{(p+q)} (z) \frac{z}{r} \mathbb{P} \left( X_{r} \in \mathrm{d}z \right)}{W^{(p)}(\epsilon)} \\
& \leq \frac{\mathrm{e}^{-(p+q) r}}{r} \frac{W^{(p+q)} (\epsilon)}{{W^{(p)}(\epsilon)}/\epsilon} \longrightarrow_{\epsilon \to 0} 0 ,
\end{align*}
since
\begin{equation*}
\lim_{\epsilon \rightarrow 0} \frac{W^{(p)}(\epsilon)}{\epsilon} =
\begin{cases}
\frac{2}{\sigma^2} & \text{if $\sigma >0$,} \\ 
\infty & \text{otherwise.}
\end{cases}
\end{equation*}
Similarly, using Lebesgue's convergence theorem, we can write
\begin{multline*}
\frac{\int_{\epsilon}^{\infty} \left[ \int_{z-\epsilon}^{z} W^{(p)} (z-y) W^{(p+q)} (y) \md y \right] \frac{z}{r} \mathbb{P} \left( X_{r} \in \mathrm{d}z \right)}{W^{(p)}(\epsilon)} \\
\leq \int_{0}^{\infty} \left[ \int_{z-\epsilon}^{z} W^{(p+q)} (y) \md y \right] \frac{z}{r} \mathbb{P} \left( X_{r} \in \mathrm{d}z \right) \longrightarrow_{\epsilon \to 0} 0 .
\end{multline*}
Therefore, we have obtained
$$
\lim_{\epsilon \to 0} \frac{1-\mathrm{e}^{-\left( p+q\right) r} \left( \Lambda_{\epsilon }^{\left( p\right) }\left( 0;r,q\right) -\frac{W^{\left( p\right) }\left( \epsilon \right) }{W^{\left( p\right) }\left(b\right) }\Lambda _{\epsilon }^{\left( p\right) }\left( b-\epsilon;r,q\right) \right)}{W^{\left( p\right) }\left( \epsilon \right)} = \frac{\mathrm{e}^{-(p+q)r} \Lambda^{(p)} (b;r,q)}{W^{(p)} (b)} .
$$

Using similar arguments, we can also show that
\begin{multline*}
\lim_{\epsilon \to 0} \frac{\R_{\epsilon}^{(p,\lambda)} (\epsilon;r,q) - \mathrm e^{-(p+q) r} \Lambda_{\epsilon}^{(p)} (0;r,q) - \frac{W^{(p)} (\epsilon)}{W^{(p)} (b)} \left( \R^{\left( p,\lambda\right)} \left( b;r,q\right) - \mathrm e^{-\left( p+q\right) r} \Lambda_{\epsilon}^{\left( p\right)} \left( b-\epsilon;r,q\right) \right)}{W^{(p)}(\epsilon)} \\
= \frac{\R^{\left( p,\lambda \right) }\left( b;r,q\right) - \mathrm e^{-\left( p+q\right) r} \Lambda ^{\left( p\right) }\left( b;r,q\right) }{W^{\left( p\right) }\left( b\right)} .
\end{multline*}
This concludes the proof for $x=0$.

Finally, no matter if $X$ is of BV or of UBV, using Equation~\eqref{laplaceallx}, Equation~\eqref{x=0} and identities in Lemma~\ref{mainlemma}, we can finish the proof of Theorem~\ref{maintheo}. To prove~\eqref{LaplaceTr3}, we can proceed as above. In both cases, the remaining details are left to the reader.

\subsection{Proof of Corollary~\ref{LaplaceTr}}


To deal with the limit as $b\rightarrow \infty $, we use \eqref{limitZW}, \eqref{CM1} and the fact that 
\begin{eqnarray*}
\lim_{b\rightarrow \infty }\frac{\mathcal{W}_{z}^{\left(p+q,-q\right)
}\left( b+z\right) }{W^{\left( p\right) }\left( b\right) } &=&\me^{\Phi(p)z }+q\lim_{b\rightarrow \infty }\int_{0}^{z}\frac{%
W^{\left( p\right) }\left( b+z-y\right) }{W^{\left( p\right) }\left(
b\right) }W^{\left( p+q\right) }\left( y\right) \md y\qquad  \\
&=&\me^{\Phi(p)z}+q\int_{0}^{z}\me^{\Phi(p)\left(
z-y\right) }W^{\left( p+q\right) }\left( y\right) \md y\\
&=&\me^{\Phi(p)z}\left( 1+q\int_{0}^{z}\me^{-\Phi
(p)y}W^{\left( p+q\right) }\left( y\right) \md y\right) \\
&=& Z_{p+q}\left(z,\Phi(p)\right) .
\end{eqnarray*}%
Then, 
\begin{eqnarray*}
&&\lim_{b\rightarrow \infty }\frac{\R^{\left( p\right) }\left(b;r,q\right) }{\Lambda ^{\left( p\right) }\left( b;r,q\right) }%
=\lim_{b\rightarrow \infty }\frac{\R^{\left( p\right) }\left(b;r,q\right) /W^{\left( p\right) }\left( b\right) }{\Lambda ^{\left(
p\right) }\left( b;r,q\right) /W^{\left( p\right) }\left( b\right) } \\
&=&\frac{\frac{p}{\left( p+q\right) \Phi(p)}\left( q+p\me^{-(p+q)r}\right)
+p\me^{-\left( p+q\right) r}\int_{0}^{r}\left(\int_{0}^{\infty }Z_{p+q}\left(z,\Phi
(p)\right)\frac{z}{s}\mathbb{P}\left( X_{s}\in \mathrm{d}z\right) \right) 
\mathrm{d}s}{\int_{0}^{\infty }Z_{p+q}\left(z,\Phi(p)\right) \frac{z}{r} \mathbb{P}\left( X_{r}\in \mathrm{d}z\right) }
.
\end{eqnarray*}

\appendix
\section{Fluctuation identities without delays}\label{sect:classical_fluct_identities}

Here is a collection of known fluctuation identities for the spectrally negative L\'{e}vy processes in terms of their scale functions.

For $p \geq 0$ and $a \leq x \leq b$ , we have 
\begin{equation}\label{Laptrans}
\mathbb{E}_{x} \left[ \mathrm{e}^{-p \tau_b^+} \mathbf{1}_{\{ \tau_b^{+} < \infty \}} \right] = \me^{-\Phi(p) (b-x)} .
\end{equation}
Moreover, the \textit{classical} probability of ruin is given by 
\begin{equation}  \label{E:classicalruinprobaX}
\p_x \left( \tau_0^- < \infty \right) = 1 - (\e \left[ X_1 \right])_{+}W(x).
\end{equation}
From \cite[Lemma 2.2]{loeffenetal2014}, we know that, for $p,s \geq 0$ and $a \leq x \leq b$, we have 
\begin{equation}\label{id1}
\e_x \left[ \me^{-p \tau_a^-} W^{(s)} \left( X_{\tau_a^-} \right) \ind_{\{ \tau_a^- < \tau_b^+ \}} \right] =
\mathcal{W}_a^{(s,p-s)} (x) - \frac{W^{(p)} (x-a)}{W^{(p)} (b-a)} \mathcal{W}_{a}^{(s,p-s)} (b) .
\end{equation}
Also, for $x \leq a$, we have the following expression for the $p$-potential measure of $X$ killed on exiting $(-\infty,a]$:
\begin{equation}\label{potmeas}
\int_0^{\infty} \me^{-pt} \mathbb{P}_{x} \left( X_{t}\in \mathrm{d}y , t < \tau _{a}^{+} \right) \md t = \left( 
\me^{\Phi(p)(x-a)} W^{\left( p\right)} \left( a-y\right) - W^{\left(p\right)} \left(x-y\right) \right) \mathrm{d}y .
\end{equation}

\section*{Acknowledgements}

Funding in support of this work was provided by the Natural Sciences and Engineering Research Council of Canada (NSERC).

M.\ A.\ Lkabous thanks the Institut des sciences math\'ematiques (ISM) and the Faculté des sciences at UQAM for their financial support (PhD scholarships).

\bibliographystyle{alpha}
\bibliography{LR2_janv2018}

\end{document}